\theoremstyle{definition}
\newtheorem{thm}{Theorem}[section]
\newtheorem{lem}[thm]{Lemma}
\newtheorem{cor}[thm]{Corollary}
\newtheorem{prop}[thm]{Proposition}
\newtheorem{qest}[thm]{Question}
\newtheorem{conj}[thm]{Conjecture}
\theoremstyle{definition}
\newtheorem{dfn}[thm]{Definition}
\newtheorem{example}[thm]{Example}
\numberwithin{equation}{section}
\title[Analysis of Contraction Mappings]{Analysis of Contraction Mappings\\to The Complement of Closed Curves}
\author{Shunichiro Orikasa}
\address{Department of Mathematics, Graduate School of Science, Kyoto University, Sakyoku, Kyoto 606–8502, Japan}
\email{orikasa.shunichiro.34x@st.kyoto-u.ac.jp}
\date{\today}
\begin{document}
\maketitle
\begin{abstract}
We study some analytic properties of distance decreasing self-maps onto the complement of a smooth curve $\Sigma$ in $S^n$. For $n>4$ and $n\equiv 0 \mod 4$, let $\Sigma$ be an embedded circle in $S^n$ and let $g$ be a complete Riemannian metric on $X=S^n\backslash \Sigma$ and $f:(X,g)\to (X,g_{std})$ be a 1-contracting diffeomorphism. We verify the sharp estimate $\inf_{x\in X}Sc(g)_x<n(n-1)$ if  any real Lipschitz 2-chain $C$ which represents the unit element $[C]$ in $H_2(S^n, W(\Sigma); \mathbb{R})$ satisfies 
$Area_g(C)>C(n)\cdot \max_i\{|\theta_i|\}$
where $W(\Sigma)$ is any tubular neighborhood of $\Sigma$ and $\{e^{2\pi i\theta_i}\}_i$ are the holonomy parameters along $\iota^*S^+$ where $S^+$ is the positive spinor bundle over $S^n$. This answers a question in \cite{gromov2018metric}.
\end{abstract}
\tableofcontents
\section{Introduction}
Llarull's theorem states the following rigidity result.  
\begin{thm}(\cite{llarull1998sharp})
  Let $M$ be a compact Riemannian spin manifold of dimension $n$.
Suppose there exists a $1$-contracting map 
\[f:(M,g)\to\mathbb{S}^n\]
 of non-zero degree and 
\[Sc(g)_x \geq n(n-1)\] 
for all $x\in M$.
Then $f:(M,g)\xrightarrow{\cong}\mathbb{S}^n$ is an isometry.
\end{thm}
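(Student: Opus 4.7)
The plan is to apply the twisted Dirac operator method of Lichnerowicz, as refined by Gromov--Lawson. Assume first that $n$ is even (the odd-dimensional case can be reduced to the even one by taking a product with $S^1$, or via a $\mathrm{Cl}_1$-linear Dirac operator). Let $E = f^*S^+(\mathbb{S}^n)$, the pullback of the positive half-spinor bundle with its pullback Hermitian connection, and form the twisted Dirac operator $D_E$ on sections of $S(M)\otimes E$. The Atiyah--Singer index theorem gives
\[ \mathrm{ind}(D^+_E) = \langle \hat{A}(M)\,\mathrm{ch}(E),[M]\rangle; \]
the top-degree component of $\mathrm{ch}(S^+(\mathbb{S}^n))$ is a non-zero multiple of the generator of $H^n(\mathbb{S}^n)$, so $\mathrm{ind}(D^+_E)$ is a non-zero multiple of $\deg(f)\neq 0$. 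In particular there exists a non-trivial harmonic spinor $\psi\in\ker D_E$.

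Next I would invoke the Lichnerowicz--Weitzenb\"ock formula
\[ D_E^2 = \nabla^*\nabla + \tfrac{1}{4}Sc(g) + \mathcal{R}^E, \]
where $\mathcal{R}^E\in\mathrm{End}(S(M)\otimes E)$ is the curvature endomorphism arising from the twist. The core step is the sharp pointwise bound $\mathcal{R}^E \geq -\tfrac{1}{4}n(n-1)$, with equality if and only if $df$ is an isometry at that point. To establish it, at each $p\in M$ diagonalize $f^*g_{std}$ with respect to $g$: choose an orthonormal frame $\{e_i\}$ for $g$ so that $df(e_i)=\mu_i\tilde e_i$ for an orthonormal frame $\{\tilde e_i\}$ at $f(p)$, where $\mu_i\in(0,1]$ by the $1$-contracting hypothesis. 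Using the constant sectional curvature $1$ of $\mathbb{S}^n$, a direct Clifford-algebra calculation expresses $\mathcal{R}^E$ in terms of the $\mu_i$ and yields the lower bound $-\tfrac{1}{2}\sum_{i<j}\mu_i\mu_j\geq -\tfrac{n(n-1)}{4}$, saturated exactly when all $\mu_i=1$.

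Combining with the hypothesis $Sc(g)\geq n(n-1)$, integration of $\langle D_E^2\psi,\psi\rangle=0$ gives
\[ 0 = \int_M |\nabla\psi|^2 + \int_M \bigl(\tfrac{1}{4}Sc(g)+\mathcal{R}^E\bigr)|\psi|^2 \geq 0, \]
so every term vanishes identically: $\psi$ is parallel (hence nowhere zero on connected $M$), $Sc(g)\equiv n(n-1)$, and the curvature estimate is saturated at every point. The equality case then forces $\mu_i\equiv 1$, i.e.\ $f^*g_{std}=g$ pointwise, so $f$ is a local isometry. Being a local isometry of non-zero degree from a compact manifold into the simply connected target $\mathbb{S}^n$ (for $n\geq 2$), $f$ must be a global isometry.

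The main obstacle is the sharp Weitzenb\"ock estimate in the second paragraph: a non-sharp bound would only obstruct positive scalar curvature and not give rigidity, so exploiting the Clifford structure on $S(M)\otimes f^*S^+(\mathbb{S}^n)$ to extract exactly the constant $n(n-1)/4$ is essential and requires careful bookkeeping in the frame adapted to the singular value decomposition of $df$. A secondary subtlety is upgrading the equality case from ``on the zero set complement of $\psi$'' to ``everywhere on $M$'': this is exactly what the parallelism $\nabla\psi=0$ buys, since a parallel spinor on a connected manifold is nowhere zero.
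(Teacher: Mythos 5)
Your argument is essentially Llarull's original proof, which is precisely what the paper cites (the paper does not reprove the theorem; its Proposition 1.8 just records the two key facts from Llarull's argument). The one cosmetic difference is that you twist by $f^*S^+(\mathbb{S}^n)$ whereas the paper, following Llarull, uses $E_0=P_{\mathrm{Spin}(n)}(S^n)\times_\lambda\mathbb{C}l_n$; since $\mathbb{C}l_n$ is $\mathrm{Spin}(n)$-equivariantly a sum of copies of the half-spinor modules, the curvature estimate $\mathcal{R}^E\geq-\tfrac14 n(n-1)$ and the non-vanishing of $\int_{S^n}ch_m$ both carry over, so this is harmless. Two steps do need tightening, though. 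Writing $\mathrm{ind}(D^+_E)=\langle\hat A(TM)\,\mathrm{ch}(E),[M]\rangle$ and then asserting this is a nonzero multiple of $\deg(f)$ silently drops the term $\mathrm{rk}(E)\cdot\hat A(M)$ contributed by the top-degree piece of $\hat A(TM)$ (present whenever $4\mid n$); you must either first observe $\hat A(M)=0$ by Lichnerowicz since $Sc>0$, or instead compare $D_E$ with the untwisted operator $D_{\underline{\mathbb{C}}^{\mathrm{rk}E}}$ so that only $\hat{ch}(E)$ survives, which is how Llarull actually argues. And the odd-dimensional reduction is glossed: a product with $S^1$ does not produce a $1$-contracting map of nonzero degree to $S^{n+1}$, so the $\mathbb{C}l_1$-linear Dirac operator (your second parenthetical option) is the one that works. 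Neither point changes the strategy, but both must be made explicit for a complete proof.
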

Recently, Gromov asked the following question in \cite{gromov2018metric}, p.39. 
\begin{qest}
Let $Z\subset S^n$ be a closed subset of codimension $k\geq 2$, let $X$ be an orientable $n$-dimensional Riemannian manifold and let $f :X\to S^n\backslash Z$ be a smooth proper map of non-zero degree which is distance decreasing or,
more generally, area decreasing.
When and how can one bound the scalar curvature of $X$ ?
\end{qest}
Gromov claimed the following theorem in \cite{gromov2018metric} with a sketch of proof. We give a complete proof in this paper.
\begin{thm}
Suppose $n=2m$ and let $\Sigma\subset S^n$ be a 1 dimensional compact  connected manifold (i.e. $\Sigma$ is diffeomorphic to [0,1] or $S^1$), such that the monodromy transformation of $P_{Spin(n)}(S^n)|_{\Sigma}$ is trivial.\\
Let $(X,g)$ be a spin complete Riemannian manifold of dimension $n$.
\[\inf_{x\in X}Sc(g)_x < n(n-1)\]
holds if $f: X\to S^n\backslash\Sigma$ be a smooth proper $1$-contracting non-zero degree map.
\end{thm}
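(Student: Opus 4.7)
I plan to adapt the Llarull--Gromov twisted Dirac argument to the non-compact setting $f:X\to S^n\setminus\Sigma$. Pull back the positive spinor bundle $S^+$ of $(S^n,g_{std})$ via $f$ to obtain $E:=f^*S^+\to X$, with its pulled-back Hermitian metric and connection, and form the twisted spin Dirac operator $D_E:\Gamma(S_X\otimes E)\to\Gamma(S_X\otimes E)$. Completeness of $(X,g)$ makes $D_E$ essentially self-adjoint on $L^2$. The Schr\"odinger--Lichnerowicz formula reads
\[
D_E^2=\nabla^*\nabla+\tfrac{1}{4}Sc(g)+\mathcal{R}^E,
\]
and Llarull's pointwise curvature computation for the round sphere, combined with the hypothesis that $f$ is $1$-contracting, yields $\mathcal{R}^E\ge -\tfrac{n(n-1)}{4}\cdot\mathrm{Id}$, with strict inequality at any point where $df$ fails to be a linear isometry.

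I would argue by contradiction: assume $\inf_X Sc(g)\ge n(n-1)$. Then $\|D_E\psi\|_{L^2}^2\ge\|\nabla\psi\|_{L^2}^2$ for every compactly supported $\psi$, and a standard cut-off argument (available because $g$ is complete) forces every element of $\ker D_E\cap L^2$ to be parallel. Strictness of the Llarull bound then kills this kernel, since $f$ cannot be a global isometry (the target is a proper open subset of $S^n$). Hence it suffices to manufacture a nonzero $L^2$-harmonic spinor for $D_E$ to reach a contradiction.

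To produce such a spinor I would combine properness of $f$ with the monodromy hypothesis. Properness sends the ends of $X$ into arbitrarily small tubular neighbourhoods $W(\Sigma)$. Triviality of the monodromy of $P_{Spin(n)}(S^n)|_\Sigma$ means that $\iota^*S^+\to\Sigma$ admits a globally parallel frame; extending this frame radially produces a parallel trivialization of $S^+$ on a disk-bundle neighbourhood of $\Sigma$, which $f$ pulls back to a trivialization of $E$ at infinity on $X$. With this trivialization at hand I would follow one of two equivalent routes: (i) attach disk-bundle caps to the ends of $X$ to obtain a closed spin manifold $\bar X$, extend $E$ to $\bar E\to\bar X$ using the parallel trivialization, and invoke Atiyah--Singer to get $\mathrm{ind}(D_{\bar E})=\deg(f)\cdot\hat A(S^n)\mathrm{ch}(S^+)[S^n]\neq 0$; or (ii) apply a Gromov--Lawson type relative index theorem directly on $X$, comparing $D_E$ with a product model on a cylindrical neighbourhood of $\Sigma$, the trivial-monodromy hypothesis killing the end contribution while the interior contribution computes $\deg(f)$ times a nonzero sphere integral. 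Either route delivers a nonzero harmonic spinor, contradicting the Lichnerowicz vanishing above.

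The main obstacle is in precisely this last step. Keeping the Llarull bound $\mathcal{R}^{\bar E}\ge -n(n-1)/4$ alive over the caps once $E$ has been extended, or, in the relative formulation, quantifying and cancelling the end contribution to the index, is where the topological monodromy hypothesis must translate into genuine analytic control. It is also the precise junction at which the non-compact Lichnerowicz estimate has to be matched against a compact(ified) index computation, and this matching is the heart of the proof; in the abstract's broader setting the mismatch is measured by $\max_i\theta_i$ against the area of a Lipschitz $2$-chain, but under the present trivial-monodromy hypothesis that defect vanishes and the argument closes.
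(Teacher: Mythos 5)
Your high-level strategy (pull back a sphere spinor bundle, Lichnerowicz plus Llarull, then an index theorem on the complete manifold to force a contradiction) is the same as the paper's, but there is a concrete gap at the step you yourself flag as "the heart of the proof."

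You assert that "extending this [parallel] frame radially produces a parallel trivialization of $S^+$ on a disk-bundle neighbourhood of $\Sigma$." This is false: the spinor connection on the round $S^n$ has nowhere-vanishing curvature, so $S^+$ (and equally the Clifford bundle $E_0$) admits \emph{no} parallel trivialization over any open subset of $S^n$. Consequently $E=f^*S^+$ is \emph{not} flat near the ends of $X$, and both of your routes break: in (i) you cannot extend across a cap with $\mathcal{R}^{\bar E}$ under control, and in (ii) the Gromov--Lawson relative index theorem literally requires the two bundles to coincide (with connection) outside a compact set, which your $E$ does not satisfy against any flat model. The missing ingredient is the collapsing map $\epsilon_\delta:S^n\to S^n$ of Definition \ref{epsilondelta}: setting $h:=\epsilon_\delta\circ f$, one still has $\operatorname{dil}(h)\le 1$, but now $h$ sends the ends of $X$ \emph{onto} the one-dimensional set $\Sigma$, so $E:=h^*E_0$ is pulled back from $E_0|_\Sigma$ on a neighborhood of infinity and is therefore genuinely flat there. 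Triviality of the monodromy then gives an isomorphism of bundles-with-connection $E^+|_{f^{-1}(W_\delta(\Sigma))}\cong(\underline{\mathbb C}^N,d)$, and the relative index theorem is applied against the trivial flat bundle $\underline{\mathbb C}^N$, not a "product model." Two smaller mismatches: the paper twists by the Clifford bundle $E_0=P_{Spin(n)}(S^n)\times_\lambda\mathbb{C}l_n$ (as in Llarull), for which the estimate $\mathcal{R}^{E}\ge -\tfrac14 n(n-1)$ and $\int_{S^{2m}}ch_m(E_0^+)\ne 0$ are established, rather than $S^+$ alone; and the vanishing of $\operatorname{ind}(D_E^+)$ comes from the potential being $\ge 0$ everywhere and $\ge\tfrac14 n(n-1)>0$ on the ends (which forces an $L^2$ harmonic spinor to be parallel and then zero on the ends, hence zero), not from pointwise strictness of Llarull's bound at non-isometry points, which is never invoked.
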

In \cite{gromov2018metric}, p.39, Gromov also asked whether the inequality $\inf Sc(X) < n(n-1)$ holds for smooth
closed curves $Z\subset S^n$, $n\geq3$ with non-trivial holonomy.

We prove our main theorem which gives an answer  to the above question. 
\begin{thm}\label{main2}
There exists a universal constant $C(n)$ such that the following statement holds.
Let $\Sigma$ be an embedded circle $\iota: S^1\hookrightarrow S^n$ and $X:=S^n\backslash\Sigma$.
Let $g$ be a complete Riemannian metric on $X$ and let $S^+$ be the positive spinor bundle over $S^n$. Then the estimate
\[\inf_{x\in X}Sc(g)_x<n(n-1)\]
holds if the following three conditions hold.
\begin{enumerate}[label=(\arabic*)]
\item $n>4$ and $n\equiv 0 \mod 4$.
\item Any real Lipschitz chain $C$ which represents the unit element $[C]$ in $H_2(S^n, W(\Sigma); \mathbb{R})$ satisfies 
\[Area_g(C)>C(n)\cdot \max_i\{|\theta_i|\},\]
where $W(\Sigma)$ is any tubular neighborhood of $\Sigma$ and $\{e^{2\pi i\theta_i}\}_i$ are the holonomy parameters along $\iota^*S^+$.
\item $f:(X,g)\to (X,g_{std})$ is a 1-contracting diffeomorphism.
\end{enumerate}
\end{thm}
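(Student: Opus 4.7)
The plan is to argue by contradiction: assume $Sc(g)_x \geq n(n-1)$ for every $x \in X$ and derive a contradiction by adapting the spinor method of Llarull and Gromov (Theorem 1.2), with a quantitative modification of the pulled-back positive spinor bundle that handles the non-trivial holonomy. The role of hypothesis (2) is to bound the cost of trivializing the meridian holonomy by an amount strictly smaller than the rigidity gap in Llarull's pointwise curvature estimate.

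First I would pull back $E := f^*S^+$ with its pulled-back connection $\nabla^E$ over $X$. Because $f$ is $1$-contracting, Llarull's algebraic estimate gives the pointwise bound $\langle \mathcal R^E\sigma,\sigma\rangle \geq -\tfrac{n(n-1)}{4}|\sigma|^2$ on the relevant half-spinor bundle, with strict inequality away from the locus where $f$ is a local isometry. By hypothesis, the holonomy of $\nabla^E$ around a small meridian of $\Sigma$ is a $\mathrm{Spin}(n)$ element with eigenvalues $\{e^{2\pi i\theta_i}\}_i$. Next, I would construct a connection modification $\tilde\nabla^E = \nabla^E + A$, with $A$ an $\mathfrak{end}(E)$-valued $1$-form supported in $W(\Sigma)$, chosen so that the meridian holonomy of $\tilde\nabla^E$ is trivial. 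Since $n > 4$, Alexander duality yields $H^2(S^n\setminus\Sigma;\mathbb{Z}) = 0$, removing the topological obstruction; and by distributing the holonomy phases over a minimal Lipschitz $2$-chain $C$ representing a generator of $H_2(S^n, W(\Sigma);\mathbb{R})$ via Stokes' theorem, one obtains
\[
\|F^{\tilde\nabla^E} - F^{\nabla^E}\|_{op} \;\leq\; \frac{C'(n)\cdot \max_i \theta_i}{Area_g(C)},
\]
which by hypothesis (2) is at most $1/C''(n)$ for $C(n) = C'(n)\cdot C''(n)$ as large as desired.

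With trivial meridian holonomy, $(E,\tilde\nabla^E)$ extends smoothly across $\Sigma$ to a bundle $\tilde E$ on $S^n$, and hence also gives a smooth bundle on all of $X$. Twisting the spinor bundle of $(X,g)$ by $\tilde E$ and forming the twisted Dirac operator $\tilde D$, the Lichnerowicz formula reads $\tilde D^2 = \nabla^*\nabla + \tfrac{Sc(g)}{4} + \mathcal R^E + \mathcal R^A$. By the first step and the standing assumption, $\tfrac{Sc(g)}{4} + \mathcal R^E \geq 0$ pointwise and is strictly positive wherever $f$ fails to be a local isometry, while $\|\mathcal R^A\|$ is controlled as above. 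Choosing $C(n)$ sufficiently large yields $\tilde D^2 \geq \varepsilon > 0$ on a set of positive measure; completeness of $g$ gives essential self-adjointness and hence $L^2$-triviality of $\ker\tilde D$. On the other hand, since $n \equiv 0 \pmod 4$ and $f$ has non-zero degree, a Gromov-Lawson relative index argument on $(X,g) \to (S^n\setminus\Sigma, g_{std})$ yields a non-vanishing index for $\tilde D$, producing a non-trivial harmonic $L^2$-spinor and the desired contradiction.

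The main obstacle is the second step: constructing the explicit perturbation $A$ that realizes the sharp proportion between added curvature and the holonomy-to-area ratio, while preserving the sign in the twisting curvature operator coming from $\nabla^E$. A secondary technical point is arranging the Gromov-Lawson relative index theorem in the complete non-compact setting with the modified bundle $\tilde E$, verifying that the index (governed by a pairing involving $\hat A$ and $\mathrm{ch}(\tilde E)$) is unchanged by the modification and is non-zero by the hypotheses $n\equiv 0\pmod 4$ and $\deg f \neq 0$.
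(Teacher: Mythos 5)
Your high-level strategy---trade the cost of taming the meridian holonomy against the area hypothesis, and then run a spinorial index argument---is the right one, but your implementation differs from the paper's at the critical point, and as written it has a genuine gap. The paper does \emph{not} perturb the connection on the pulled-back positive spinor bundle. Instead it keeps $E^+ := (\epsilon_\delta\circ f)^*E_0^+$ with its pullback connection and constructs a \emph{separate} comparison bundle $F = \bigoplus_i \tilde L^i$ over $X$, where each $\tilde L^i$ is a trivial line bundle whose connection $1$-form is an extension (produced via Hahn--Banach / minimal mass, Proposition 3.9) of $2\pi i\theta_i\,\beta^*d\theta$ with $\|d\omega\|_\infty$ small thanks to hypothesis (2). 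The relative index theorem is then applied to $D_{E^+}$ versus $D_F$, both of which have vanishing index by positivity, while the relative index equals $\int_{S^n}ch_m(E_0^+) - \int_X\hat A\wedge\hat{ch}(F) = \int_{S^n}ch_m(E_0^+) \neq 0$ because $n\equiv 0\bmod 4$ kills all $\hat A_i$ contributions (Proposition 3.5) and $n>4$ forces $c_1(\tilde L^i)^2 \in H^4_{cpt}(X)=0$, so $ch_m(\tilde L^i)=0$.

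Your plan instead modifies the connection directly, $\tilde\nabla^E = \nabla^E + A$. This runs into the problem you yourself flag: Llarull's bound $\mathcal R^E \geq -\tfrac{1}{4}n(n-1)$ is sharp, with equality precisely where $\|Df\|=1$, which can happen on a large set (and on $W(\Sigma)$ in particular, since $f$ is only assumed $1$-contracting). Adding $\mathcal R^A$ supported in $W(\Sigma)$ would then push $\tfrac14 Sc + \mathcal R^E + \mathcal R^A$ below zero there, destroying both the vanishing-kernel argument and the Fredholmness. The paper's way around this is twofold and you are missing both halves: (i) it precomposes with the \emph{collapsing map} $\epsilon_\delta$, so that $E^+$ becomes \emph{flat} on $f^{-1}(W_\delta(\Sigma))$ (because $\dim\Sigma = 1$), giving $\mathcal R^{E^+}\equiv 0$ there and strict positivity at infinity---this is essential for Fredholmness and is absent in your $E:=f^*S^+$; and (ii) it never perturbs $\nabla^{E^+}$, so the sharpness of Llarull's estimate never becomes a problem. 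Finally, your concluding sentence that the index ``is non-zero by $n\equiv 0\pmod4$ and $\deg f\neq 0$'' does not track the actual role of these hypotheses: $n\equiv 0\pmod 4$ and $n>4$ enter to make the $F$-side of the relative index vanish, not to make an $\hat A$-pairing non-zero, and you would also need to say what the Chern--Weil forms of $\tilde E$ are integrated against (they are not compactly supported a priori, since the connection was changed only near $\Sigma$, and $\hat{ch}$ with respect to $\tilde\nabla^E$ need not have compact support).
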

The universal constant $C(n)$ is approximately $C(n)\approx 2^n$.
The proof is given in Section 4.4.

\subsection{Background Results on Fredholm Theory on Complete Riemannian Manifolds}
Recall that an oriented manifold $M$ admits a spin structure if and only if its second Stiefel- Whitney class is zero. Furthermore, if $w_{2}(M) = 0$,
then the spin structures on $M$ are in one-to-one correspondence with elements
of $H^1(M;\mathbb{Z}/2)$. In particular the $n$-sphere $S^n$ admits the unique spin structure if $n\ge 2$. We denote by $P_{spin(n)}(S^n)$ the unique spin structure.\\\indent
If $M$ is a spin manifold and  a Riemannian metric $g$ is chosen on $M$, the canonical connection on $P_{Spin(n)}(M)$ is 
induced by the Levi-Civita connection on $M$. It also induces the connection on the spinor bundle $S$. Therefore $S$ becomes a hermitian vector bundle equipped with a connection.\\\indent
If $M$ is a spin manifold of dimension $2n$, the volume element $\omega=i^n\mathrm{vol}\in \mathbb{C}l(TM)$ gives a $\mathbb{Z}/2$-grading on $S$ such that
\[S=S^{+}\oplus S^{-}\]
where $S^{+}$ and $S^{-}$ are the $+1$ and $-1$ eigenbundles for the Clifford multiplication by $\omega$.
The Dirac operator is odd with respect to this grading\[D^{\pm}:\Gamma(S^{\pm})\to \Gamma(S^{\mp})\]
where $D^{+}$ and $D^{-}$ are formal adjoints of one another.
\\\indent
We now recall the Lichnerowicz formula for the twisted Dirac operator $D_{E}$ of the bundle $S\otimes E$ over $M$. Let $(X,g)$ be a Riemannian spin manifold with scalar curvature $Sc(g)$, and let $S\otimes E$ be a twisted spinor bundle over X. Then the twisted Dirac operator $D_{E}$ and the connection Laplacian $\nabla^{*}\nabla$ of $S\otimes E$ satisfy the identity:
\begin{equation}\label{lich}
(D_{E})^2=\nabla^{*}\nabla+\frac{1}{4}Sc(g)+\mathcal{R}^E
\end{equation}
where $\mathcal{R}^E$ is defined in terms of a local basis of pointwise orthonormal tangent vector fields by $\mathcal{R}^E (\sigma\otimes\epsilon)=\frac{1}{2}\sum_{j,k=1}^n (e_j e_k\sigma)\otimes(R_{e_j,e_k}^{E}\epsilon)$ on vectors $\sigma\otimes\epsilon \in S\otimes E$.\\
Let $D_E$ be a twisted Dirac operator on a complete Riemannian spin manifold $X$. We say that $D_E$ is strictly positive at infinity if there exists a positive real number $c>0$ and a compact set $K\subset X$ such that
\[\frac{1}{4}Sc(g)+\mathcal{R}^E \geq c\ \text{(on $X\backslash K$).}\]
If $D_E$ is strictly positive at infinity, then it is Fredholm, as follows.
\begin{prop}[\cite{gromov1983positive} Theorem 3.2]\label{Fredholm}
Let $D_E$ be a twisted Dirac operator on a complete Riemannian spin manifold $X$ of dimension $2m$. If $D_E$ is strictly positive at infinity, then \[\mathrm{dim}\{s\in L^2(S^{+}\otimes E); D_Es=0\}\] and \[\mathrm{dim}\{s\in L^2(S^{-}\otimes E); D_Es=0\}\] are finite.
\end{prop}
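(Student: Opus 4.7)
The plan is to derive an a priori $L^2$-bound for $L^2$-harmonic sections via the Lichnerowicz formula \eqref{lich}, then use interior elliptic regularity of $D_E$ together with the Rellich-Kondrachov compactness theorem on the compact set $K$ to conclude finite dimensionality by Riesz's criterion.

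First I would establish the key estimate. Let $s\in L^2(S^{\pm}\otimes E)$ with $D_E s=0$. Pairing \eqref{lich} against $s$ and integrating would formally give
\[
\int_X|\nabla s|^2+\int_X\left(\tfrac14 Sc(g)+\mathcal{R}^E\right)|s|^2=0.
\]
Since $X$ is noncompact, this identity must first be justified: testing instead against $\chi_R^2 s$, where $\chi_R$ is a cutoff supported in $B(x_0,R)$ with $|\nabla \chi_R|\le 2/R$ (available because $g$ is complete), integrating by parts on the compactly supported expression and passing $R\to\infty$ recovers the displayed identity -- equivalently one appeals to essential self-adjointness of $D_E$ on $C_c^\infty$ for complete spin manifolds. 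Splitting the curvature integral at $K$ and using the hypothesis $\tfrac14 Sc(g)+\mathcal{R}^E\ge c$ on $X\setminus K$ yields
\[
\|\nabla s\|_{L^2(X)}^2+c\,\|s\|_{L^2(X\setminus K)}^2\le M_K\,\|s\|_{L^2(K)}^2,
\]
where $M_K:=\sup_K\bigl|\tfrac14 Sc(g)+\mathcal{R}^E\bigr|$. Consequently every $s\in\ker(D_E|_{L^2})$ satisfies $\|s\|_{H^1(X)}\le C\,\|s\|_{L^2(K)}$ with a uniform constant $C$.

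Second, to conclude finite dimensionality, pick an open precompact neighborhood $K\Subset K'$. Interior elliptic regularity for the first-order elliptic operator $D_E$ upgrades the $L^2$-control on $K'$ to an $H^1(K')$-bound controlled by $\|s\|_{L^2(K')}$, which in turn is controlled by $\|s\|_{L^2(K)}$ via the previous step applied on $K'$. Rellich-Kondrachov makes $H^1(K')\hookrightarrow L^2(K)$ compact, so the restriction map $\ker(D_E|_{L^2})\to L^2(K)$ sends the closed unit ball to a precompact set; combined with the a priori estimate, the unit ball of $\ker(D_E|_{L^2})$ is precompact in $L^2(X)$, and Riesz's theorem forces $\dim\ker(D_E|_{L^2(S^{\pm}\otimes E)})<\infty$.

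The main obstacle is technical rather than conceptual: one must rigorously justify the integration by parts and the uniqueness of the $L^2$-domain of $D_E$ on the noncompact manifold, since on a general (incomplete) manifold a harmonic spinor need not satisfy the Lichnerowicz identity globally. Completeness of $(X,g)$ is used critically through the cutoff construction to bypass this difficulty; once that is secured, the argument reduces to the standard interplay between a priori estimates and elliptic compactness familiar from the compact case.
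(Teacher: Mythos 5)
Your proof is correct and is essentially the standard Gromov--Lawson argument. The paper does not prove this proposition---it cites it directly as Theorem 3.2 of \cite{gromov1983positive}---so there is no in-paper proof to compare against; the route you take (a cutoff-justified Lichnerowicz identity giving an a priori bound $\|s\|_{H^1(X)}\le C\|s\|_{L^2(K)}$, followed by interior elliptic regularity, Rellich--Kondrachov on a precompact neighbourhood of $K$, and Riesz's criterion) is precisely how the cited result is established.

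One small point worth making explicit when you run the cutoff argument: the cross term $2\int\chi_R\langle\nabla s,\nabla\chi_R\otimes s\rangle$ must be absorbed by Young's inequality \emph{before} passing to the limit (e.g.\ absorb $\tfrac12\int\chi_R^2|\nabla s|^2$ on the left and let $\tfrac{C}{R^2}\|s\|^2_{L^2}\to 0$), since you do not yet know $\nabla s\in L^2$; with that absorption, monotone convergence gives
\[
\tfrac12\|\nabla s\|_{L^2(X)}^2 + c\,\|s\|_{L^2(X\setminus K)}^2 \le M_K\|s\|_{L^2(K)}^2,
\]
which is the estimate you quote up to harmless constants. You should also note that the hypothesis ``$\tfrac14 Sc(g)+\mathcal R^E\ge c$ on $X\setminus K$'' is used only as a one-sided bound; no upper bound on the potential is required outside $K$, which is exactly why one isolates the integral over $K$ (where continuity gives $M_K<\infty$). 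With these points spelled out, the argument is complete and yields finite dimensionality of $\ker D_E$ on both $L^2(S^+\otimes E)$ and $L^2(S^-\otimes E)$ simultaneously, since the Lichnerowicz identity is applied to the full $D_E^2$.
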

We define $\mathrm{ind}(D_E^{+}):=\mathrm{dim}\{s\in L^2(S^{+}\otimes E); D_Es=0\}-\mathrm{dim}\{s\in L^2(S^{-}\otimes E); D_Es=0\}$.\\
Gromov and Lawson showed in \cite{gromov1983positive} the following theorem called the relative index theorem.
\begin{thm}[Gromov-Lawson]
Let $D_E$ and $D_F$ be twisted Dirac operators on a complete spin Riemannian manifold $X^{2m}$, and suppose that $E$ and $F$ coincide outside a compact set and that these operators are strictly positive at infinity. Then
\[\mathrm{ind}(D_E^{+})-\mathrm{ind}(D_F^{+})=\int_X \hat{A}(TX)\wedge (ch(E)-ch(F))\]
holds.
\end{thm}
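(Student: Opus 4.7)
The plan is to argue by contradiction. Assume $Sc(g)_x\geq n(n-1)$ throughout $X$, and manufacture a twisted Dirac operator on $(X,g)$ whose $L^2$-index must vanish by a Weitzenb\"ock/Lichnerowicz positivity argument but is forced to be non-zero by the Gromov--Lawson relative index theorem. In the absence of $\Sigma$ one would twist by Llarull's bundle $f^*S^+$; the new difficulty is that the monodromy $\{e^{2\pi i\theta_i}\}$ of $\iota^*S^+$ along $\Sigma$ obstructs a good extension of this bundle near the puncture. Hypothesis (2) is designed precisely so that this obstruction can be absorbed by an auxiliary connection of arbitrarily small curvature.

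Concretely, I would fix a tubular neighbourhood $W(\Sigma)$ and a Lipschitz representative $C$ of a generator of $H_2(S^n,W(\Sigma);\mathbb{R})$. Since $n\geq 8$, both $\pi_1(X)$ and $H^2(X;\mathbb{Z})$ vanish, so every Hermitian line bundle over $X$ is topologically trivial. Parallel-transporting the eigenline decomposition of $\iota^*S^+$ normally into $W(\Sigma)$ produces a local splitting of $f^*S^+$ into rank-one pieces, and for each eigenvalue $e^{2\pi i\theta_i}$ I would take the trivial line bundle $L_i\to X$ carrying a unitary connection $\nabla^{L_i}$ whose holonomy around a small meridian linking $\Sigma$ equals $e^{-2\pi i\theta_i}$. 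Because $X$ is simply connected, such holonomy is realised by integrating $R^{\nabla^{L_i}}$ over a bounding $2$-chain; choosing $\nabla^{L_i}$ so that $R^{\nabla^{L_i}}$ is spread uniformly over a normal thickening of $C$ yields $\|R^{\nabla^{L_i}}\|_{L^{\infty}}\leq c(n)\cdot \theta_i/Area_g(C)$, which by hypothesis (2) is smaller than any prescribed $\varepsilon>0$ upon enlarging $C(n)$. Define $E$ as $f^*S^+$ twisted by the $L_i$'s on the corresponding eigensummands, and let $F$ be obtained from $E$ by a cut-off interpolation making it agree with $E$ on a compact set while being topologically trivial outside a larger compact set; then $E$ and $F$ coincide on the complement of a compact subset of $X$.

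Next I would verify the hypotheses of Proposition \ref{Fredholm} for both $D_E$ and $D_F$. The curvature endomorphism splits as $\mathcal{R}^{E}=\mathcal{R}^{f^*S^+}\otimes\mathrm{id}+\mathrm{id}\otimes\mathcal{R}^{L}$, Llarull's pointwise estimate together with the $1$-contracting hypothesis yields $\mathcal{R}^{f^*S^+}\geq -\tfrac{n(n-1)}{4}\|df\|_{\mathrm{op}}^{2}\cdot\mathrm{id}$, and the construction of $L$ gives $\|\mathcal{R}^{L}\|\leq\varepsilon$. Because $g$ is complete on $X$ while $g_{std}$ degenerates at $\Sigma$ and $f$ is proper and $1$-contracting, $\|df\|_{\mathrm{op}}\to 0$ at the ends of $X$; combined with $Sc(g)\geq n(n-1)$ and (\ref{lich}), this makes $\tfrac14 Sc(g)+\mathcal{R}^{E}$ strictly bounded below by a positive constant outside a compact set, so $D_E$ is Fredholm. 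For $\varepsilon$ sufficiently small the same bound holds pointwise on all of $X$, forcing $\ker_{L^{2}}D_E^{\pm}=0$ via the Lichnerowicz formula and integration by parts, so $\mathrm{ind}(D_E^+)=0$; an identical argument yields $\mathrm{ind}(D_F^+)=0$.

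Finally I would apply the relative index theorem: the compactly supported form $\hat A(TX)\wedge(ch(E)-ch(F))$ is evaluated topologically. Since $f$ has degree one, $F$ is trivial at the ends, and $ch(E)$ differs from the natural extension of $ch(f^*S^+)$ to $S^n$ only by terms of order $\varepsilon$ coming from $R^{\nabla^{L_i}}$, the integral equals
\[\langle \hat A(TS^n)\wedge ch(S^+),[S^n]\rangle + O(\varepsilon),\]
which by Llarull's computation on $S^n$ is a non-zero integer of absolute value $1$. Since the integer-valued relative index must equal this quantity, it is non-zero once $\varepsilon$ is small, contradicting $\mathrm{ind}(D_E^+)-\mathrm{ind}(D_F^+)=0$ from the previous paragraph. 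The main obstacle I anticipate is the construction of the second paragraph: building a global unitary bundle on $X$ whose holonomy along meridians cancels $\{e^{2\pi i\theta_i}\}$ while admitting the sharp $L^{\infty}$ curvature estimate $\|R^{L}\|\lesssim \theta_i/Area_g(C)$, together with arranging the cut-off so that $E$ and $F$ coincide exactly outside a compact set. The constant $C(n)\approx 2^n$ tracks the rank of $S^+$ and the loss in passing from the pointwise bound on $R^{L}$ to the operator norm of $\mathcal{R}^L$ on the full twisted spinor bundle.
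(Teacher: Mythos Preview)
Your proposal does not address the stated theorem at all. The statement you were asked to prove is the Gromov--Lawson \emph{relative index theorem}, which in this paper is quoted as a background result from \cite{gromov1983positive} and is not proved here; the paper simply invokes it. What you have written is instead a sketch of the paper's \emph{main} Theorem~\ref{main2}, and in the course of that sketch you explicitly appeal to the relative index theorem (``Finally I would apply the relative index theorem \ldots''). You cannot use a theorem as a step in its own proof.

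If one reads your text as an attempt at Theorem~\ref{main2}, the overall architecture is close to the paper's, but two technical points diverge from it and would need repair. First, your positivity-at-infinity argument relies on the claim that $\|df\|_{\mathrm{op}}\to 0$ at the ends because ``$g_{std}$ degenerates at $\Sigma$''; it does not---$g_{std}$ is smooth across $\Sigma$, and nothing in the hypotheses forces $df\to 0$. The paper circumvents this by precomposing with the collapsing map $\epsilon_\delta$ so that $h=\epsilon_\delta\circ f$ sends a whole neighbourhood of the end onto the $1$-dimensional set $\Sigma$, making the pulled-back bundle genuinely flat there. Second, your construction of the line bundles $L_i$ with curvature ``spread uniformly over a normal thickening of $C$'' is heuristic; the paper obtains the sharp $L^\infty$ bound on $d\omega$ via a Hahn--Banach extension argument (Proposition~\ref{ext2}), which is what actually links the curvature bound to the mass of $2$-chains representing the generator of $H_2(S^n,W(\Sigma);\mathbb{R})$ and produces the constant $C(n)$.
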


\subsection{Definitions and Notations}
We fix some notations and introduce some definitions.
\begin{itemize}
\item Let $(X,g)$ be a Riemannian manifold. We denote by $Sc(g)$ the scalar curvature function of $X$.
\item $\mathbb{S}^n:=(S^n, g_{std})$ is denoted by the unit sphere with the standard metric.
\item Let $E$ be the complex vector bundle. We define the reduced Chern character $\hat{ch}(E)$ as
\[\hat{ch}(E):=ch(E)-rank(E)\]
where $ch(E)$ is the Chern character of $E$.
\item We often use the associated bundle 
\begin{equation}\label{E0}
E_{0}:=P_{spin(n)}(S^n)\times_{\lambda}\mathbb{C}l_n
\end{equation} where $P_{spin(n)}(S^n)$ is the unique spin structure of the $n$-sphere $S^n\ (n\geq2)$ and $\lambda$ is the representation by left multiplication.
\end{itemize}
\begin{dfn}[dilation constant]
Let $f:(X,g_X)\to (Y,g_Y)$ be a differentiable map between smooth Riemannian manifolds.
The dilation constant of $f$ is defined by the supremum of the norms of the differential $Df$.
\[dil(f):=sup_{p\in X}||Df_p||\in\mathbb{R}\cup\{\infty\}\]

We say $f$ is \textbf{strictly contracting at infinity} if there exists a compact subset $K\subset X$ such that 
\[dil(f|_{X\backslash K})<1\]
hold.
\end{dfn}
We constructed the collapsing map $\epsilon_\delta$ by the similar construction of \cite{chu2024llarullstheorempuncturedsphere}.
\begin{dfn}\label{epsilondelta}Let $\Sigma$ be a 1 dimensional connected compact manifold. For a sufficiently small $\delta>0$, we define $\epsilon_{\delta}:\mathbb{S}^n\to \mathbb{S}^n$ as follows. 
We say that the map $\epsilon_\delta$ is the \textbf{collapsing map} of $\Sigma$ if it satisfies the following properties. 
\begin{enumerate}
\item There exist open neighborhoods \[W_\delta(\Sigma)\subsetneq W'_\delta(\Sigma)\] 
of $\Sigma$ such that \[\epsilon_\delta(W_\delta(\Sigma))=\Sigma\]
\item $\epsilon_{\delta}(x)=x$ outside of $W'_\delta(\Sigma)$ 
\item $\displaystyle\limsup_{\delta \to 0} dil(\epsilon_\delta)\leq 1$
\end{enumerate}
\end{dfn}
\begin{prop}
There exists a collapsing map of $\Sigma$.
\end{prop}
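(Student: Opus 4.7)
The plan is to reduce the construction to a radial collapsing in the normal disk bundle of $\Sigma$ and to choose the radial profile carefully so that the dilation tends to $1$. Since $\Sigma$ is a compact embedded $1$-submanifold of $\mathbb{S}^n$, there exists $r_0>0$ such that the normal exponential map $\exp^\perp:\{(p,v)\in N\Sigma : |v|<r_0\}\to W\subset\mathbb{S}^n$ is a diffeomorphism onto an open tubular neighborhood. For each $\delta\in(0,r_0^2)$, I would set
\[
W_\delta(\Sigma):=\{\exp_p^\perp(v) : p\in\Sigma,\ |v|<\delta\},\qquad W'_\delta(\Sigma):=\{\exp_p^\perp(v) : p\in\Sigma,\ |v|<\sqrt{\delta}\},
\]
and define
\[
\epsilon_\delta(\exp_p^\perp(v)):=\exp_p^\perp\!\bigl(\phi_\delta(|v|)\,v/|v|\bigr)
\]
on $W$, extended by the identity outside $W$ (with $\epsilon_\delta(p)=p$ for $p\in\Sigma$), for a suitable smooth radial profile $\phi_\delta:[0,\infty)\to[0,\infty)$.

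The construction of $\phi_\delta$ is the heart of the argument. I would require: (a) $\phi_\delta\equiv 0$ on $[0,\delta]$; (b) $\phi_\delta(r)=r$ on $[\sqrt\delta,\infty)$; (c) $\phi_\delta(r)\leq r$ throughout; (d) $\sup_r \phi_\delta'(r)\leq 1/(1-\sqrt\delta)$. The linear interpolation $\phi_\delta(r)=(r-\delta)/(1-\sqrt\delta)$ on $[\delta,\sqrt\delta]$ already satisfies (c), since $(r-\delta)/(1-\sqrt\delta)\leq r$ simplifies to $r\sqrt\delta\leq\delta$, which is exactly the range $r\leq\sqrt\delta$. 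A standard smoothing near the endpoints $r=\delta$ and $r=\sqrt\delta$ preserves (a)--(d) up to an arbitrarily small perturbation of the slope bound. Properties (1) and (2) of Definition \ref{epsilondelta} are then immediate from the definition of $\epsilon_\delta$.

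For property (3), I would pass to Fermi-type coordinates $(t,v)$ on $W$ built from a parallel orthonormal frame of $N\Sigma$; in these coordinates the ambient metric takes the form $g=dt^2+\langle dv,dv\rangle +O(|v|)$ in matrix norm. The map $\epsilon_\delta$ acts as the identity in $t$ and as the Euclidean dilation $v\mapsto\phi_\delta(|v|)v/|v|$ on each normal fiber, whose Jacobian has operator norm $\max(\phi_\delta'(|v|),\phi_\delta(|v|)/|v|)\leq 1/(1-\sqrt\delta)$ by (c) and (d). Combining this with the metric error $1+O(\sqrt\delta)$ at points of $W'_\delta$ (where both $|v|\leq\sqrt\delta$ and $|\phi_\delta(|v|)|\leq\sqrt\delta$) gives $dil(\epsilon_\delta)\leq(1+O(\sqrt\delta))/(1-\sqrt\delta)$, which tends to $1$ as $\delta\to 0$. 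The main obstacle is precisely this sharp bound: both $\phi_\delta\leq r$ (to prevent the angular stretching factor $\phi_\delta/r$ from exceeding $1$) and a transition interval of length on the order of $\sqrt\delta$ rather than $\delta$ (so the radial slope tends to $1$) are essential, and a naive interpolation on $[\delta,2\delta]$ would instead produce $dil(\epsilon_\delta)\geq 2$.
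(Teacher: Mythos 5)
Your construction for the circle case is essentially the same as the paper's: collapse a small inner tube to $\Sigma$, interpolate radially on a transition annulus of width comparable to the outer radius so the radial slope tends to $1$, exploit monotonicity and $\phi_\delta(r)\leq r$ to keep the angular stretching $\leq 1$, and absorb the $O(|v|)$ metric distortion of Fermi coordinates into a factor tending to $1$. The paper parametrizes the collapse radius as $\delta^2/2(1+\delta)$ and the outer radius as $\delta$ with slope bound $1+\delta$, whereas you use $\delta$ and $\sqrt\delta$ with slope bound $1/(1-\sqrt\delta)$; these are the same idea up to reparametrization, and your analysis of the Jacobian operator norm $\max(\phi_\delta',\phi_\delta/r)$ is a correct and slightly more explicit version of the paper's assertion that $dil(g)\leq 1+\delta$. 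Your closing remark that a naive linear interpolation on $[\delta,2\delta]$ from $0$ to $2\delta$ would have slope $2$ is also a fair observation, though the paper of course does not do that.

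However, there is a genuine gap: Definition \ref{epsilondelta} (and the proposition) are stated for $\Sigma$ a compact connected $1$-manifold, i.e.\ for both $\Sigma\cong S^1$ and $\Sigma\cong[0,1]$, and your argument only treats the circle. For $\Sigma\cong[0,1]$ the image of $\{v\in N\Sigma : |v|<r_0\}$ under $\exp^\perp$ is not an open neighborhood of $\Sigma$ in $S^n$ — it is missing the two ``caps'' beyond the endpoints $p_\pm$ — so the straight normal-disk-bundle collapse does not define $\epsilon_\delta$ near $\partial\Sigma$, and a fiberwise radial contraction is not even well-defined there. The paper handles this by first extending $[0,1]$ to an embedded circle $\tilde\Sigma\subset S^n$, working in the tube of $\tilde\Sigma$, and then patching the cylindrical collapse over the interior of $\Sigma$ with genuine $n$-dimensional radial contractions on half-balls $B^n_\pm$ around $p_\pm$ (and the identity on the remainder of the tube over $\tilde\Sigma\setminus\Sigma$). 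You would need to add this construction, or something equivalent, to make the proof complete; the dilation estimate for the radial contraction on the half-balls is the same profile analysis you already did, so the missing step is the patching rather than any new estimate.
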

\begin{proof}
Let $d:S^n\times S^n\to\mathbb{R}$ be the metric induced by the Riemannian metric $g_{std}$.
\begin{enumerate}[label=$\left(\arabic*\right)$]
\item First, assume that $\Sigma\cong S^1$. Set \[N\Sigma:=\{v\in TS^n; v\perp T_x\Sigma \ \text{for some}\  x\in\Sigma\}\]
\[N_{2\delta}:=\{v\in N\Sigma; ||v||<2\delta \}\]
\[U_{2\delta}:=\{p\in S^n; d(p,\Sigma)<2\delta \}\]
By the tubular neighborhood theorem, fix a sufficiently small $2\delta$, the map $exp$ is a diffeomorphism from $N_{2\delta}$ onto $U_{2\delta}$.
There exists a function $\eta:[0,2\delta]\to \mathbb{R}$ such that
\begin{itemize}
\item $\eta(t)=0$\ for $t\in[0,\frac{\delta^2}{2(1+\delta)}]$
\item $\eta(t)=t$\ for $t\in[\delta,2\delta]$
\item $0\leq \eta'(t)\leq 1+\delta$\ for $t\in[0,2\delta]$
\end{itemize}
By choosing a orthonormal framing of $N_{2\delta}$, we have an isometry
\[f:N_{2\delta}\xrightarrow{\cong}\Sigma\times B^{n-1}(2\delta)\]
where $B^{n-1}(2\delta)$ is a $n-1$ dimensional Euclidean ball with radius $2\delta$.\\
Set $g:\Sigma\times B^{n-1}(2\delta)\to\Sigma\times B^{n-1}(2\delta),(p,r\theta)\mapsto(p,\eta(r)\theta)$ where $r\theta$ is expression by the radial coordinates.

Define $\epsilon_\delta$ through 
\[U_{2\delta}\xrightarrow{exp^{-1}}N_{2\delta}\xrightarrow{f}\Sigma\times B^{n-1}(2\delta)\xrightarrow{g}\Sigma\times B^{n-1}(2\delta)\xrightarrow{f^{-1}}N_{2\delta}\xrightarrow{exp}U_{2\delta}\]
We have \[dil(g)\leq 1+\delta\] and \[dil(exp:N_{2\delta}\to U_{2\delta})\to 1\ (\delta\to 0)\]\[dil(exp^{-1}:U_{2\delta}\to N_{2\delta})\to 1\ (\delta\to 0)\]
Thus $\displaystyle\limsup_{\delta \to 0} dil(\epsilon_\delta)\leq 1$.\\
$\epsilon_\delta$ is an identity map near the boundary of $U_\delta$. Therefore $\epsilon_{\delta}$ smoothly extends to a map on $\mathbb{S}^n$ by defining $\epsilon_\delta(x):=x\ (\text{for}\ x\in\mathbb{S}^n\backslash U_\delta)$. Then we obtain the map $\epsilon_\delta:\mathbb{S}^n\to\mathbb{S}^n$ such that 
$\displaystyle\limsup_{\delta \to 0} dil(\epsilon_\delta)\leq 1$.

\item Secondly, suppose that $\Sigma\cong[0,1]$. There exists an embedded circle $S^1\cong\tilde{\Sigma}\subset\mathbb{S}^n$ such that $\Sigma\subset\tilde{\Sigma}$ (by concatenating two points $p_{\pm}$ of $\partial\Sigma$). Repeat the same argument for $\tilde{\Sigma}$ but replace $g$. Set
\[N\tilde{\Sigma}:=\{v\in TS^n; v\perp T_x\tilde{\Sigma} \ \text{for some}\  x\in\tilde{\Sigma}\}\]
\[\tilde{N}_{2\delta}:=\{v\in N\tilde{\Sigma}; ||v||<2\delta \}\]
\[\tilde{U}_{2\delta}:=\{p\in S^n; d(p,\tilde{\Sigma})<2\delta \}\]
By the tubular neighborhood theorem, fix a sufficiently small $2\delta$, the map $exp$ is a diffeomorphism from $\tilde{N}_{2\delta}$ onto $\tilde{U}_{2\delta}$.
There exists a function $\eta:[0,2\delta]\to \mathbb{R}$ such that
\begin{itemize}
\item $\eta(t)=0$\ for $t\in[0,\frac{\delta^2}{2(1+\delta)}]$
\item $\eta(t)=t$\ for $t\in[\delta,2\delta]$
\item $0\leq \eta'(t)\leq 1+\delta$\ for $t\in[0,2\delta]$
\end{itemize}
By choosing an orthonormal frame of $\tilde{N}_{2\delta}$, we have an isometry
\[f:\tilde{N}_{2\delta}\xrightarrow{\cong}\tilde{\Sigma}\times B^{n-1}(2\delta)\]
Construct $\tilde{g}:\tilde{\Sigma}\times B^{n-1}(2\delta)\to\tilde{\Sigma}\times B^{n-1}(2\delta)$ as follows.\\
For $(p,r\theta)\in \Sigma\times B^{n-1}(2\delta)$, we define
\[\tilde{g}(p,r\theta)=(p,\eta(r)\theta)\]
Let $B^n_{\pm}\subset\Sigma\times B^{n-1}(2\delta)$ be half balls around $p_{\pm}$. (See below.)\\
\begin{center}
\begin{tikzpicture}
\draw(-3,2) -- (3, 2);
\draw(-3,0) -- (3, 0);
\draw(-1.5,1) -- (1.5, 1);
\draw[dashed](-1.5,2)--(-1.5,0);
\draw[dashed](1.5,2)--(1.5,0);
\draw[fill=lightgray, xshift=1.5cm, yshift=1cm](90:1) arc (90:-90:1cm);
\draw[fill=lightgray, xshift=-1.5cm, yshift=1cm](90:1) arc (90:270:1cm);
\draw(-2.8cm,1.5) node{$B_{+}^n$};
\draw(2.8cm,1.5) node{$B_{-}^n$};
\draw(-1.7cm,1) node{$p_{+}$};
\draw(1.8cm,1) node{$p_{-}$};
\draw(0cm,1.5) node{$\Sigma\times B^{n-1}(2\delta)$};
\end{tikzpicture}
\end{center}
For $r\theta\in B^n_{\pm}$ (expressed by radial coordinates), we define
\[\tilde{g}(r\theta)=\eta(r)\theta\]
For $x\in\tilde{\Sigma}\times B^{n-1}(2\delta)\backslash(B_{+}\cup B_{-})$, define $\tilde{g}(x)=x$.\\
Define $\epsilon_\delta$ through 
\[\tilde{U}_{2\delta}\xrightarrow{exp^{-1}}\tilde{N}_{2\delta}\xrightarrow{f}\tilde{\Sigma}\times B^{n-1}(2\delta)\xrightarrow{g}\tilde{\Sigma}\times B^{n-1}(2\delta)\xrightarrow{f^{-1}}\tilde{N}_{2\delta}\xrightarrow{exp}\tilde{U}_{2\delta}\]
We have \[dil(g)\leq 1+\delta\] and \[dil(exp:\tilde{N}_{2\delta}\to \tilde{U}_{2\delta})\to 1\ (\delta\to 0)\]\[dil(exp^{-1}:\tilde{U}_{2\delta}\to \tilde{N}_{2\delta})\to 1\ (\delta\to 0)\]
Thus $\displaystyle\limsup_{\delta \to 0} dil(\epsilon_\delta)\leq 1$.\\
$\epsilon_\delta$ is an identity map near the boundary of $\tilde{U}_\delta$. Therefore $\epsilon_{\delta}$ extends smoothly to a map on $\mathbb{S}^n$ by defining $\epsilon_\delta(x):=x\ (\text{for}\ x\in\mathbb{S}^n\backslash U_\delta)$. Then we obtain the map $\epsilon_\delta:\mathbb{S}^n\to\mathbb{S}^n$ such that 
$\displaystyle\limsup_{\delta \to 0} dil(\epsilon_\delta)\leq 1$.
\end{enumerate}

\end{proof}

We define $\delta(\Sigma)$ as the supremum of $\delta$ such that the collapsing map $\epsilon_{\delta}$ is defined. Precisely, if $\Sigma\cong S^1$(resp. $[0,1]$)
\begin{equation}\label{delta}
\delta(\Sigma):=sup\{\delta>0; exp: N_{2\delta}\to U_{2\delta} \ \text{is a diffeomorphism}\}
\end{equation}
(resp. $\delta(\Sigma):=sup\{\delta>0; exp: \tilde{N}_{2\delta}\to \tilde{U}_{2\delta} \ \text{is a diffeomorphism}\}$)
\subsection{Llarull's theorem}
Llarull showed in \cite{llarull1998sharp} the famous rigidity theorem.
\begin{thm}(\cite{llarull1998sharp})
  Let $M$ be a compact Riemannian spin manifold of dimension $n$.
Suppose there exists a $1$-contracting map 
\[f:(M,g)\to\mathbb{S}^n\]
 of non-zero degree.\\
Then, either there exists $x\in M$ with $Sc(g)_x<n(n-1)$, or $f:M\to S^n$ is an Riemannian isometry.

\end{thm}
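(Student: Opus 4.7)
The plan is to argue by contradiction via the Lichnerowicz formula \eqref{lich} applied to the twisted Dirac operator $D_E$ with twist bundle $E = f^{\ast}S^{+}$ (for $n$ even; for $n$ odd one stabilizes by an auxiliary $S^{1}$ factor or uses a full Clifford bundle to reduce to the even case). Assume $Sc(g)_{x} \geq n(n-1)$ for all $x \in M$ and that $f$ is not a Riemannian isometry; I will reach a contradiction.

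First, a topological step produces a nontrivial harmonic twisted spinor. Naturality of the Chern character together with $\deg f \neq 0$ gives
\[
\mathrm{ind}(D_{E}^{+}) \;=\; \int_{M}\hat A(TM)\wedge ch(f^{\ast}S^{+}) \;=\; \deg(f)\cdot\int_{S^{n}}\hat A(TS^{n})\wedge ch(S^{+}),
\]
and a direct computation on $S^{n}$ shows the right-hand integral is a nonzero integer (essentially $\pm \chi(S^{n})/2$, since $ch(S^{+})-ch(S^{-})$ coincides with $e(TS^{n})/\hat A(TS^{n})$ in top degree on $\mathbb{S}^{n}$). Thus $\ker D_{E}\neq 0$; fix a nonzero $\sigma\in\ker D_{E}$.

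Second, the sharp pointwise curvature estimate. On $\mathbb{S}^{n}$ the spinor curvature acts by $R^{S^{+}}_{e_{i},e_{j}}=\tfrac{1}{2}e_{i}\cdot e_{j}$. Pulling back through $f$ and diagonalizing $df_{x}$ with singular values $\lambda_{1}(x),\dots,\lambda_{n}(x)$ in an orthonormal frame, Llarull's Clifford--algebraic Cauchy--Schwarz computation yields
\[
\langle\mathcal{R}^{E}\sigma,\sigma\rangle_{x} \;\geq\; -\frac{1}{2}\Bigl(\sum_{i<j}\lambda_{i}(x)\lambda_{j}(x)\Bigr)|\sigma(x)|^{2} \;\geq\; -\frac{n(n-1)}{4}|\sigma(x)|^{2},
\]
where the second inequality uses $\lambda_{i}(x)\leq 1$ from the $1$-contracting hypothesis. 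Combined with $Sc(g)\geq n(n-1)$, the integrated Lichnerowicz identity
\[
0 \;=\; \|\nabla\sigma\|_{L^{2}}^{2} + \int_{M}\Bigl(\tfrac{1}{4}Sc(g)\,|\sigma|^{2}+\langle\mathcal{R}^{E}\sigma,\sigma\rangle\Bigr)
\]
has a pointwise nonnegative integrand summing to zero, forcing $\nabla\sigma\equiv 0$ and pointwise equality. Since $\sigma$ is parallel and nontrivial, $|\sigma|$ is a positive constant, so equality holds at every $x\in M$; tracing through the equality case of the Clifford Cauchy--Schwarz forces $\lambda_{1}(x)=\cdots=\lambda_{n}(x)=1$ everywhere. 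Hence $df_{x}$ is orthogonal at every point and $f$ is a local isometry, which by nonzero degree and compactness of $M$ upgrades to a global Riemannian isometry $f\colon M\xrightarrow{\cong}\mathbb{S}^{n}$.

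The main obstacle is the middle step: the sharp constant $n(n-1)/4$ does not follow from a naive operator-norm bound on $\mathcal{R}^{E}$ but requires the Clifford-algebraic Cauchy--Schwarz argument in a frame diagonalizing $df$, and the rigidity conclusion rests entirely on the equality analysis, which must rule out configurations where some $\lambda_{i}<1$ compensate with others. The odd-dimensional reduction and the fact that $ch(S^{+})$ produces a nonzero index term on $S^{n}$ are standard but deserve verification in dimension $n\leq 3$ where the sphere theorem still needs $n\geq 2$.
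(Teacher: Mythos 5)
The paper does not reprove this theorem; it quotes it from \cite{llarull1998sharp} and isolates, in Proposition~\ref{Llarull}, the two inputs it actually needs ($\int_{S^{2m}}ch_m(E_0^+)\neq 0$ and $\mathcal{R}^E\geq -\tfrac14 n(n-1)$), with the twist taken to be $E=f^*E_0$ for $E_0=P_{spin(n)}(S^n)\times_\lambda\mathbb{C}l_n$ rather than $f^*S^+$. Your argument reconstructs Llarull's proof correctly and in the same spirit: index theory producing a nonzero $L^2$-kernel, the frame-adapted Clifford Cauchy--Schwarz giving the sharp bound $-\tfrac12\sum_{i<j}\lambda_i\lambda_j\geq-\tfrac{n(n-1)}{4}$ from $\lambda_i\leq 1$, and the equality case forcing $\lambda_i\equiv 1$ and hence a local (and then, by degree plus simple-connectivity of $S^n$, $n\geq 2$, global) isometry. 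The only substantive difference from the paper's setup is your choice of $f^*S^+$ as the twist; the paper's $E_0^+$ is essentially a direct sum of copies of $S^+$, so both the curvature estimate and the nonvanishing of $\int_{S^n}ch_m$ (which equals $\pm 1$ for $S^+$ and a nonzero multiple of it for $E_0^+$) carry over. Your worry that ``some $\lambda_i<1$ might compensate'' is in fact already handled by the elementary inequality $\sum_{i<j}\lambda_i\lambda_j\leq\binom{n}{2}$ with equality iff all $\lambda_i=1$; no further Clifford-algebraic rigidity analysis is needed at that step, only for the parallelism of $\sigma$. The odd-dimensional reduction you gesture at (stabilizing by a circle or passing to the full Clifford module) is the standard device and is what Llarull does as well, but as stated it is a little vague; for the purposes of this paper it is harmless since everything downstream assumes $n=2m$.
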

We use the following estimates that appeared in the proof of Theorem \ref{Llarull}.
\begin{prop}(\cite{llarull1998sharp})\label{Llarull}
Suppose $n=2m$. Let $M$ be a spin $n$-dimensional Riemannian
  manifold with metric $g$. Let $\mathbb{S}^{n}$ be the unit $n$-sphere with standard metric $g_{std}$.
  Let $f:M\to S^{n}$ be a smooth $1$-contracting map and set $E_0:=P_{spin(n)}(S^n)\times_{\lambda}\mathbb{C}l_n$, $E:=f^*E_0$. 
  Then 
  \[\int_{S^{n}}ch_m(E_0^+)\neq 0\]
  and
  \[\mathcal{R}_x^{E}\geq-\frac{1}{4}n(n-1)\]
  for all $x\in M$.
\end{prop}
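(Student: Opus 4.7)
The plan is to treat the two assertions separately, both reducing to computations on the standard sphere. For the non-vanishing $\int_{S^n} ch_m(E_0^+) \neq 0$, I first use the algebra isomorphism $\mathbb{C}l_n \cong \mathrm{End}(\mathbb{C}^{2^m})$ (valid for $n = 2m$) to identify $E_0$, as a $\mathbb{Z}/2$-graded Hermitian bundle, with $\mathbb{S} \otimes \mathbb{C}^{2^m}$, where $\mathbb{S} = \mathbb{S}^+ \oplus \mathbb{S}^-$ is the complex spinor bundle on $S^n$ (the second factor is trivial because left multiplication of $Spin(n) \subset \mathbb{C}l_n$ on $\mathbb{C}l_n \cong S \otimes S^*$ acts trivially on $S^*$). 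Hence $ch(E_0^\pm) = 2^m\, ch(\mathbb{S}^\pm)$. Since $TS^n$ is stably trivial, $\hat A(TS^n) = 1$, so Atiyah--Singer gives $\mathrm{ind}(D_{\mathbb{S}^+}^+) = \int_{S^n} ch_m(\mathbb{S}^+)$; this is a nonzero multiple of $\chi(S^n) = 2$ via the identification of $D_{\mathbb{S}}$ with a signature-type operator on $S^n$, so $\int_{S^n} ch_m(E_0^+) = 2^m \int_{S^n} ch_m(\mathbb{S}^+) \neq 0$.

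For the pointwise curvature estimate, I first work on $(S^n, g_{std})$. The Riemann tensor $R^{std}_{ijkl} = \delta_{ik}\delta_{jl} - \delta_{il}\delta_{jk}$ lifts to $R^{E_0}_{e_i, e_j}\phi = \tfrac{1}{2}(e_i e_j)\cdot\phi$ (left Clifford multiplication), so substituting into the definition of $\mathcal R^{E_0}$ gives
\[\mathcal R^{E_0}(\sigma \otimes \phi) = \tfrac{1}{4}\sum_{i,j=1}^n (e_i e_j \sigma)\otimes(e_i e_j \phi) = -\tfrac{n}{4}\sigma\otimes\phi + \tfrac{1}{2}\sum_{i<j}(e_i e_j \sigma)\otimes(e_i e_j \phi),\]
using $e_i^2 = -1$ to collect the diagonal. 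The off-diagonal operator is then bounded below by a Llarull-type Cauchy--Schwarz estimate: pair $e_i e_j \sigma$ with $e_i e_j \phi$ using the Clifford relations $e_i e_j + e_j e_i = -2\delta_{ij}$ and the fact that each $e_i e_j$ ($i\ne j$) acts as an isometry squaring to $-1$. The resulting operator inequality yields $\mathcal R^{E_0} \geq -\tfrac{n(n-1)}{4}$ pointwise on $\mathbb{S}\otimes E_0$.

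For a general $1$-contracting $f: M \to S^n$ with $E = f^*E_0$, I pull back. At $x \in M$, fix a $g$-orthonormal frame $\{e_j\}$ of $T_xM$ and a $g_{std}$-orthonormal frame $\{\tilde e_\alpha\}$ of $T_{f(x)}S^n$, and write $Df(e_j) = \sum_\alpha a_{j\alpha}\tilde e_\alpha$; the contraction hypothesis gives $\sum_\alpha a_{j\alpha}^2 \leq 1$ for each $j$. Bilinearity of the curvature produces
\[\mathcal R^E_x(\sigma \otimes \phi) = \tfrac{1}{4}\sum_{j,k}\sum_{\alpha,\beta} a_{j\alpha}a_{k\beta}\,(e_j e_k \sigma)\otimes(\tilde e_\alpha \tilde e_\beta \phi),\]
and the weighted form of the previous Cauchy--Schwarz estimate, combined with $\sum_\alpha a_{j\alpha}^2 \leq 1$, reproduces $\mathcal R^E_x \geq -\tfrac{n(n-1)}{4}$. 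The main obstacle is the sharp Clifford-algebraic inequality producing the constant $-\tfrac{n(n-1)}{4}$ in the second paragraph; extending it through the pullback is essentially bookkeeping, but one must handle the weights $a_{j\alpha}$ symmetrically so that the contraction constraint can be applied term by term without losing the sharp constant.
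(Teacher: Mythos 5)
Your scaffolding follows Llarull's, but the crucial step in the curvature estimate is missing and a couple of intermediate formulas are off. On the index side: the operator whose index equals $\chi(S^n)=2$ is the Euler (de Rham/Gauss--Bonnet) operator, not a signature-type operator --- the signature of $S^n$ is zero. What one actually uses is (i) the Euler-class relation $ch(S^+)-ch(S^-)=\pm\,e(TS^n)\,\hat A(TS^n)^{-1}$, which on $S^n$ integrates to $\pm\chi(S^n)$, together with (ii) $ch_m(S^+)+ch_m(S^-)=0$, since $S^+\oplus S^-$ has constant Chern character because $TS^n$ is stably trivial; these give $\int_{S^n}ch_m(S^+)=\pm1$ and hence $\int_{S^n}ch_m(E_0^+)=\pm2^m\ne0$. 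Your conclusion is right but the stated justification invokes the wrong classical operator. Also, in the pointwise formula for $\mathcal R^{E_0}$ the curvature two-form $R^{E_0}_{e_i,e_j}$ vanishes identically when $i=j$, so the double sum must run over $i\ne j$ only; the spurious diagonal you include (with the wrong sign as well, $+n/4$ rather than $-n/4$) would shift the constant to $-n^2/4$ instead of $-n(n-1)/4$. The correct statement is $\mathcal R^{E_0}=\tfrac12\sum_{i<j}(e_ie_j\,\cdot)\otimes(e_ie_j\,\cdot)$, with each summand having eigenvalues $\pm1$.

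The genuine gap is the pullback step, which is not bookkeeping. Expanding $\mathcal R^E_x=\tfrac14\sum_{j\ne k}\sum_{\alpha\ne\beta}a_{j\alpha}a_{k\beta}\,(e_je_k\,\cdot)\otimes(\tilde e_\alpha\tilde e_\beta\,\cdot)$ and bounding termwise fails to give the sharp constant: each operator $(e_je_k)\otimes(\tilde e_\alpha\tilde e_\beta)$ with $j\ne k$, $\alpha\ne\beta$ does have eigenvalues $\pm1$, but the triangle inequality over all quadruples combined with the column bound $\sum_\alpha a_{j\alpha}^2\le1$ only controls $\|\mathcal R^E\|$ by something of order $n^3$, far worse than $n(n-1)$. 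Llarull's essential move, which you skip, is to diagonalize $df_x$ via the singular value decomposition: choose orthonormal frames $\{e_j\}$ of $T_xM$ and $\{\tilde e_j\}$ of $T_{f(x)}S^n$ so that $df(e_j)=\lambda_j\tilde e_j$ with $0\le\lambda_j\le1$ (here one needs the full operator-norm hypothesis $\|df_x\|\le1$, which is strictly stronger than the column condition $\sum_\alpha a_{j\alpha}^2\le1$ you extract from it). In these frames the cross terms vanish, $\mathcal R^E_x=\tfrac12\sum_{j<k}\lambda_j\lambda_k\,(e_je_k\,\cdot)\otimes(\tilde e_j\tilde e_k\,\cdot)$, and each summand is $\ge-\lambda_j\lambda_k\ge-1$, giving $\mathcal R^E_x\ge-\tfrac14n(n-1)$. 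The SVD is the mechanism that makes the sharp bound possible; there is no ``weighted Cauchy--Schwarz'' shortcut that reproduces it from a general matrix $a_{j\alpha}$.
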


\section{Smooth Curves with Trivial Monodromies}
In this section, we give a complete proof of the theorem which is claimed by Gromov in \cite{gromov2018metric}.

\begin{thm}\label{tri}
Suppose $n=2m$ and let $\Sigma\subset S^n$ be a 1 dimensional compact  connected manifold(i.e. $\Sigma$ is diffeomorphic to [0,1] or $S^1$), such that the monodromy transformation of $P_{Spin(n)}(S^n)|_{\Sigma}$ is trivial. \\
Let $(X,g)$ be a spin complete Riemannian manifold of dimension $n$.
\[\inf_{x\in X}Sc(g)_x < n(n-1)\]
holds if $f: X\to \mathbb{S}^n\backslash\Sigma$ be a smooth proper $1$-contracting non-zero degree map.\\

\end{thm}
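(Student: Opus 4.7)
The strategy is a proof by contradiction: assume $Sc(g)_x \geq n(n-1)$ for every $x \in X$ and derive a contradiction from the Gromov--Lawson relative index theorem applied to a twisted Dirac operator on $X$.

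First I would replace $f$ by $\tilde f_\delta := \epsilon_\delta \circ f : X \to \mathbb{S}^n$, using the collapsing map of Definition \ref{epsilondelta}. For small $\delta > 0$, properness of $f$ partitions $X$ into the compact outer piece $X^{\mathrm{out}}_\delta := f^{-1}(\mathbb{S}^n \setminus W'_\delta)$ on which $\tilde f_\delta = f$ is $1$-contracting; the compact transition piece $X^{\mathrm{tr}}_\delta := f^{-1}(\overline{W'_\delta}\setminus W_\delta)$ on which $\|d\tilde f_\delta\| \leq 1 + \delta$; and the noncompact inner piece $X^{\mathrm{in}}_\delta := f^{-1}(W_\delta)$ on which $\tilde f_\delta$ factors through $\Sigma$, so that $\mathrm{rank}\, d\tilde f_\delta \leq 1$. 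Since $\epsilon_\delta$ is isotopic to $\mathrm{id}_{\mathbb{S}^n}$, the signed preimage count of a regular value in $\mathbb{S}^n \setminus \overline{W'_\delta}$ gives $\deg \tilde f_\delta = \deg f \neq 0$.

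Second I would form $E := \tilde f_\delta^* E_0$, with $E_0$ as in \eqref{E0}, equipped with the pullback connection. Because the induced connection on $P_{Spin(n)}(\mathbb{S}^n)|_\Sigma$ is automatically flat (since $\dim \Sigma = 1$) and has trivial monodromy by hypothesis, $(E_0|_\Sigma, \nabla)$ is gauge-equivalent to the trivial bundle with trivial connection. Consequently $(E, \nabla^E)$ is trivial as a bundle with connection on $X^{\mathrm{in}}_\delta$; I would extend this trivialization to a globally trivial $(F, \nabla^F)$ on $X$ agreeing with $(E, \nabla^E)$ outside the compact set $X^{\mathrm{out}}_\delta \cup X^{\mathrm{tr}}_\delta$. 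On $X^{\mathrm{in}}_\delta$ the pullback curvature $\tilde f_\delta^* R^{E_0}$ vanishes (the pullback of any $2$-form by a rank-$\leq 1$ map is zero), so $\mathcal{R}^E \equiv 0$ and $\tfrac{1}{4}Sc(g) + \mathcal{R}^E \geq \tfrac{1}{4}n(n-1) > 0$; the analogous bound holds for $F$. Hence $D_E$ and $D_F$ are both strictly positive at infinity, and Fredholm by Proposition \ref{Fredholm}.

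Finally, the Gromov--Lawson relative index theorem, together with naturality of the Chern character and the non-vanishing in Proposition \ref{Llarull}, yields
\[\mathrm{ind}(D_E^+) - \mathrm{ind}(D_F^+) = \deg(f)\cdot \int_{\mathbb{S}^n}\hat A(T\mathbb{S}^n) \wedge \widehat{ch}(E_0) \neq 0.\]
On the other hand, integrating the Lichnerowicz formula \eqref{lich} against any $L^2$-harmonic spinor, combined with Llarull's bound $\mathcal{R}^E \geq -\tfrac{1}{4}n(n-1)$ on $X^{\mathrm{out}}_\delta$ and $\mathcal{R}^E = 0$ on $X^{\mathrm{in}}_\delta$, should force $\ker D_E^\pm = 0$ and $\ker D_F^\pm = 0$, giving a vanishing relative index and the desired contradiction. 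The main obstacle is the transition region $X^{\mathrm{tr}}_\delta$, where Llarull's bound only gives $\mathcal{R}^E \geq -\tfrac{(1+\delta)^2}{4}n(n-1)$ so that $\tfrac{1}{4}Sc + \mathcal{R}^E$ may be mildly negative. I expect to handle this by taking $\delta \to 0$: the relative index above is a $\delta$-independent integer while $X^{\mathrm{tr}}_\delta$ shrinks onto a compact hypersurface of measure zero, and a small conformal rescaling $g \mapsto (1+\eta)g$ providing a uniform gap $Sc > n(n-1)$ can absorb the $(1+\delta)^2$ factor once $\delta$ is taken small relative to $\eta$. Making this limit argument precise while retaining the topological lower bound is the technical crux.
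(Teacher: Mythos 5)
Your overall strategy matches the paper's: assume $Sc(g) \geq n(n-1)$, pull back $E_0$ along $\epsilon_\delta\circ f$, use triviality of monodromy to get a flat (hence trivial) bundle on the noncompact end, invoke Proposition~\ref{Fredholm} and the Gromov--Lawson relative index theorem, and derive a contradiction from Proposition~\ref{Llarull}. The bundle construction, the Fredholmness check, and the index computation are essentially as in the paper.

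The genuine gap is exactly the ``technical crux'' you flag at the end, and your proposed workaround does not close it. On the transition region $X^{\mathrm{tr}}_\delta$ you only have $\|d(\epsilon_\delta\circ f)\|\leq 1+\delta$, so Llarull's estimate degrades to $\mathcal{R}^E\geq -\frac{(1+\delta)^2}{4}n(n-1)$ and the Lichnerowicz potential $\tfrac14 Sc+\mathcal{R}^E$ may be strictly negative on a set of positive measure for every fixed $\delta>0$. The $\delta\to 0$ limit does not help, because both the bundle $E=(\epsilon_\delta\circ f)^*E_0$ and the operator $D_E$ change with $\delta$, so there is no continuity of kernels to exploit; and the conformal rescaling $g\mapsto(1+\eta)g$ scales the scalar curvature the wrong way (it lowers $Sc$) while simultaneously changing $dil(f)$, so it does not produce the claimed uniform gap. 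The paper resolves this with a simple preprocessing step that you are missing: first replace $f$ by $f'=h\circ f$, where $h:\mathbb{S}^n\to\mathbb{S}^n$ is a degree-one self-map that is strictly contracting away from a neighborhood of a single point, so that $f'$ becomes \emph{strictly contracting at infinity} (at the cost of moving $\Sigma$ and perturbing the holonomy back to trivial). Once $dil(f'|_{X\setminus K})\leq 1-\epsilon_0$ on a neighborhood of infinity, the estimate $dil(\epsilon_\delta\circ f')\leq dil(\epsilon_\delta)\cdot dil(f')\leq 1$ holds for small $\delta$ \emph{everywhere}, including the transition region, and Llarull's bound $\mathcal{R}^E\geq -\tfrac14 n(n-1)$ then applies globally with no $(1+\delta)^2$ loss. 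This is the missing idea; the rest of your argument goes through once it is inserted.
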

\begin{proof}
We can assume $f$ is strictly contracting at infinity because we can replace $f$ by $f'=h\circ f$ where $h$ is a self-map on $\mathbb{S}^n$ which is contracting except for a neighborhood of some point. (Notice that there exists a point $p$ such that $dil(f)_p<1$.) In this step, $\Sigma$ is changed into another one $\Sigma'$ and the holonomy parameter becomes slightly different from the former one. Then we can perturb near some point on $\Sigma'$ so that the holonomy along $\Sigma'$ becomes trivial and $f$ is strictly contracting.\\
By contradiction, assume that $Sc(g)\geq n(n-1)$. Let $\epsilon_{\delta}:\mathbb{S}^n\to\mathbb{S}^n$ is the collapsing map constructed in Definition \ref{epsilondelta} and let $h:X\to\mathbb{S}^n$ be the composition $h:=\epsilon_{\delta}\circ f$. It can be checked that $dil(h)$ satisfies $dil(h)\leq 1$ for sufficiently small $\delta$ because we have the inequalities $dil(h)\leq dil(\epsilon_{\delta})\cdot dil(f)$ and $f$ is strictly contracting at infinity.

We take $E_{0}:=P_{spin(n)}(S^n)\times_{\lambda}\mathbb{C}l_n$ (see \eqref{E0})and obtain a pull-back bundle $E:=h^{*}(E_0)$ with connection. 
Since $E_0$ has the $\mathbb{Z}/2$-grading $E_0=E_0^+\oplus E_0^-$, $E$ splits into $E=E^+\oplus E^-$ where 
$E^{\pm}=h^{*}(E_0^{\pm})$. According to Proposition \ref{Llarull}, we have 
\begin{equation}
\int_{S^{2m}}ch_m(E_0^+)\neq 0
\end{equation} and 
\begin{equation}
\mathcal{R}^{E}\geq -\frac{1}{4}n(n-1)
\end{equation}
The connection on $E_0^{+}|_{\Sigma}$ is trivial because of the monodromy condition. Thus $E_0^{+}|_{\Sigma}$ is isomorphic to the trivial vector bundle with connection.
\[(E_0^{+}|_{\Sigma}, \nabla)\cong (\underline{\mathbb{C}}^{N}, d)\]
We also have 
\[(E^{+}|_{f^{-1}(W_\delta(\Sigma))}, \nabla)\cong (\underline{\mathbb{C}}^{N}, d)\]
because $h$ maps $f^{-1}(W_\delta(\Sigma))$ onto $\Sigma$ and the connection on $E|_{f^{-1}(W_\delta(\Sigma))}$ is the 
pull-back connection. Note that $f^{-1}(S^{n}\backslash W_\delta(\Sigma))$ is a compact set in $X$ since $f$ is proper.\\
\indent In order to apply the relative index theorem, we check the Fredholmness of the operators  $D_{\underline{\mathbb{C}}^{N}}$ and $D_E$. By the Lichnerowicz theorem,

\begin{equation}\label{lich c}
(D_{\underline{\mathbb{C}}^{N}})^2=\nabla^*\nabla+\frac{1}{4}Sc(g)
\end{equation}
\begin{equation}\label{lich e}
(D_E)^2=\nabla^*\nabla+\frac{1}{4}Sc(g)+\mathcal{R}^E
\end{equation}
Note that the potential terms of both operators are strictly positive at the end of $X$ because 
\[Sc(g)\geq n(n-1)\]
and \[R^E\equiv 0\]
since the vector bundle $E$ is flat on $f^{-1}(W_\delta(\Sigma))$. Thus $D_{\underline{\mathbb{C}}^{N}}$ and $D_E$ are 
Fredholm operators (by Proposition\ref{Fredholm}). By the relative index theorem,
\begin{equation}
\mathrm{ind}(D_E^{+})-\mathrm{ind}(D_{\underline{\mathbb{C}}^{N}}^{+})=\int_X \hat{A}(TX)\wedge (\hat{ch}(E))
\end{equation}
Recall that $ch(E)=rank(E)+ch_1(E)+\dots ch_m(E)$ with $ch_i(E) \in H^{2i}(X)$. On $\mathbb{S}^{2m}$ the Chern character of
 the vector bundle $E_0^+$ is given by,
 \[ch(E_0^+)=2^{n-1}+ch_m(E_0^+)\]
 hence
 \[ch(E)=h^*ch(E_0^+)=2^{n-1}+h^*ch_m(E_0^+)\]
 Consequently, using the fact that the leading term of $\hat{A}(TX)$ is 1 and $deg(f)\neq 0$,
 \begin{align}\label{even rel}
 \mathrm{ind}(D_E^{+})-\mathrm{ind}(D_{\underline{\mathbb{C}}^{N}}^{+})&=\int_X \hat{A}(TX)\wedge \hat{ch}(E)\\
&=\int_X \hat{A}(TX)\wedge h^*ch_m(E_0^+)\notag \\
&=\int_X \hat{A}(TX)\wedge f^*(\epsilon_\delta^*ch_m(E_0^+))\notag \\
&=deg(f)\int_{S^{2m}} ch_m(E_0^+)\neq 0\notag
\end{align}
On the other hand, comparing \eqref{lich c}, \eqref{lich e} and Llarull's estimate,
\[\frac{1}{4}Sc(g)\geq \frac{1}{4}n(n-1)\]
\[\frac{1}{4}Sc(g)+\mathcal{R}^E\geq 0\]
and \[\frac{1}{4}Sc(g)+\mathcal{R}^E\geq \frac{n(n-1)}{4}\] on $f^{-1}(W_\delta(\Sigma))$. Thus $\mathrm{ind}(D_{\underline{\mathbb{C}}^{N}}^{+})=\mathrm{ind}(D_E^{+})=0$ which contradicts \eqref{even rel}. 
\end{proof}
In particular, we obtain the following corollary.
\begin{cor}
Suppose $n=2m$ and let $\Sigma\subset S^n$ be a 1 dimensional compact manifold(i.e. a connected component of $\Sigma$ is diffeomorphic to [0,1] or $S^1$), such that the monodromy transformation of $P_{Spin(n)}(S^n)|_{\Sigma}$ is trivial. \\
Let  $g$ be a complete Riemannian metric on $S^n\backslash\Sigma$.\\
If we have the following inequality,\[g_x\geq (g_{std})_x\] on $x\in S^n$.
Then $\inf_{x\in S^n\backslash\Sigma}Sc(g)_x< n(n-1)$.
\end{cor}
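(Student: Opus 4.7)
The plan is to apply Theorem \ref{tri} with $f := \mathrm{id}_{S^n \setminus \Sigma}$. Set $X := S^n \setminus \Sigma$. As an open subset of $S^n$, $X$ inherits a spin structure, and $g$ is complete by hypothesis. I would take
\[
f:(X,g)\longrightarrow(\mathbb{S}^n\setminus\Sigma,g_{std}),\qquad f(x):=x,
\]
which is smooth and has degree $1$.

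Verifying the remaining hypotheses of Theorem \ref{tri} is immediate. First, $f$ is proper: it is a homeomorphism of $X$ onto itself, and compactness is a topological invariant, so preimages of compact sets are compact. Second, $f$ is $1$-contracting: for any $p\in X$ and $v\in T_pX$,
\[
\|Df_p(v)\|_{g_{std}} = \|v\|_{g_{std}} \leq \|v\|_g
\]
because $g_x \geq (g_{std})_x$ as quadratic forms.

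The one genuine point to address is that the corollary allows $\Sigma$ to have several connected components, each diffeomorphic to $[0,1]$ or $S^1$, while Theorem \ref{tri} is stated for connected $\Sigma$. I would extend Definition \ref{epsilondelta} as follows: for $\Sigma = \Sigma_1 \sqcup \cdots \sqcup \Sigma_k$, choose $\delta$ smaller than half the minimum pairwise $g_{std}$-distance between distinct components, so that the tubular neighborhoods are pairwise disjoint, and then apply the single-component construction on each $\Sigma_i$ separately while leaving the rest of $\mathbb{S}^n$ fixed. The resulting $\epsilon_\delta:\mathbb{S}^n\to\mathbb{S}^n$ still satisfies the three properties of Definition \ref{epsilondelta}. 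Since the monodromy is assumed trivial on every component of $\Sigma$, the pull-back bundle $E^+|_{f^{-1}(W_\delta(\Sigma))}$ remains flat with trivial holonomy, so the relative index computation in the proof of Theorem \ref{tri} carries over verbatim and yields $\inf_{x\in X}Sc(g)_x < n(n-1)$.

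I do not foresee any substantive technical obstacle. The content of the corollary is essentially the observation that the identity map is a proper $1$-contraction whenever $g \geq g_{std}$, and this observation lets Theorem \ref{tri} do the rest. The only bookkeeping is the extension of the collapsing-map construction to disconnected $\Sigma$, which is routine.
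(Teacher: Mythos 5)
Your proposal is correct and takes the same basic route as the paper: the paper's own proof of this corollary is the one-liner ``set $X:=(S^n\setminus\Sigma,g)$, $f:=\mathrm{Id}$, use Theorem~\ref{tri}.'' Your verification that the identity is proper, $1$-contracting under $g\geq g_{std}$, and of degree one matches what the paper is implicitly invoking. The one place where you go beyond the paper is the observation that Theorem~\ref{tri} is stated for \emph{connected} $\Sigma$, whereas the corollary allows several components; the paper's proof does not address this at all, so your fix is a genuine improvement rather than padding. Your extension is sound: taking $\delta$ below half the minimal pairwise distance between components makes the tubular neighborhoods disjoint, the collapsing map can be defined component-by-component and glued with the identity, and since the monodromy of $P_{Spin(n)}(S^n)|_{\Sigma_i}$ is trivial on each component, $E^+$ restricted to each $f^{-1}(W_\delta(\Sigma_i))$ is flat and trivializable, with the trivializations chosen independently per component. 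This is all the relative index theorem needs (the two bundles must agree with connection outside a compact set), so the contradiction argument from Theorem~\ref{tri} goes through unchanged. In short: same approach as the paper, but you caught and repaired a small gap the paper's terse proof leaves open.
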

\begin{proof}
Set $X:=(S^n\backslash\Sigma,g)$ and $f:=\mathrm{Id}:(X,g)\to\mathbb{S}^n$. Use Theorem \ref{tri}.
\end{proof}
\section{Distance Decreasing Maps onto The Complement of A Circle}
In this section, we consider closed curves with non-trivial monodromy. 
\subsection{Good triplets}
We introduce the following technical notion.
\begin{dfn}
Let $\Sigma$ be an embedded circle $S^1\cong\Sigma\subset S^n$ and let $g$ be a complete Riemannian metric on 
$S^n\backslash\Sigma$. Let $f: (S^n\backslash\Sigma,g)\to (S^n\backslash\Sigma,g_{std})$ be a $1$-contracting diffeomorphism.\\\indent
We say that $(g, f, \Sigma)$ is \textbf{good} if and only if there exists a real number $0<\delta_0<\delta(\Sigma)$ ($\delta(\Sigma)$ is defined in \eqref{delta}) and a hermitian 
vector bundle $F$ over $S^n\backslash\Sigma$ with connection satisfying the following properties.
\begin{enumerate}
\item $F|_{f^{-1}(W_{\delta_0}(\Sigma))}\cong (\epsilon_{\delta_0}|_{W_{\delta_0}(\Sigma)\cap (S^n\backslash\Sigma)}\circ f)^*(E_0^+|_{\Sigma})$ (as hermitian vector bundle with connection)
\item  $||\mathcal{R}^{F}||\leq \frac{1}{4} n(n-1)$
\item $\displaystyle\int_{S^n\backslash\Sigma}\hat{A}(TX)\wedge\hat{ch}(F)\neq \int_{S^{n}} ch_{\frac{n}{2}}(E_0^+)$
\end{enumerate}
\end{dfn}

Later, we consider non-trivial examples in Theorem \ref{ext}. 
\begin{thm}\label{adm}
Let $\Sigma$ be an embedded circle $\Sigma\subset S^n$ and $g$ be a complete metric on $S^n\backslash\Sigma$. Let $f: (S^n\backslash\Sigma,g)\to (S^n\backslash\Sigma,g_{std})$ be a $1$-contracting diffeomorphism and strictly contracting at infinity. Suppose $n=2m$ and $(g, f, \Sigma)$ is good. Then the estimate
\[\inf_{x\in S^n\backslash\Sigma} Sc(g)_x<n(n-1)\]
holds.
\end{thm}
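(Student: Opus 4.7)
The strategy mirrors the proof of Theorem \ref{tri}: argue by contradiction assuming $Sc(g)\geq n(n-1)$, set up two twisted Dirac operators, show both indices vanish by a Bochner argument, and extract a contradiction from the Gromov--Lawson relative index theorem together with condition (3) of goodness. The bundle $F$ is engineered precisely to play the role that the trivial bundle $\underline{\mathbb{C}}^N$ plays in the monodromy-trivial case, now that the connection on $E_0^+|_\Sigma$ has nontrivial holonomy.

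First I would fix the $\delta_0$ and $F$ provided by goodness, set $h:=\epsilon_{\delta_0}\circ f\colon X\to S^n$, and put $E^+:=h^*E_0^+$ with the pullback connection. Shrinking $\delta_0$ if necessary --- using that $f$ is strictly contracting at infinity and $\limsup_{\delta\to 0}dil(\epsilon_\delta)\leq 1$ --- I ensure $dil(h)\leq 1$, so Llarull's estimate (Proposition \ref{Llarull}) gives $\mathcal{R}^{E^+}\geq -\tfrac{1}{4}n(n-1)$ on $X$. By condition (1), $F\cong E^+$ on $f^{-1}(W_{\delta_0}(\Sigma))$ as hermitian bundles with connection. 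Because $\Sigma\cong S^1$ supports no nonzero $2$-forms, every connection on $E_0^+|_\Sigma$ is flat, so $\mathcal{R}^F=\mathcal{R}^{E^+}=0$ on $f^{-1}(W_{\delta_0}(\Sigma))$. Combined with $Sc(g)\geq n(n-1)$ and condition (2) $\|\mathcal{R}^F\|\leq \tfrac{1}{4}n(n-1)$, the Lichnerowicz potentials $\tfrac{1}{4}Sc+\mathcal{R}^{E^+}$ and $\tfrac{1}{4}Sc+\mathcal{R}^F$ are nonnegative on $X$ and at least $\tfrac{1}{4}n(n-1)>0$ on the open set $f^{-1}(W_{\delta_0}(\Sigma))$. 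Proposition \ref{Fredholm} gives Fredholmness, and the standard Bochner argument --- any $L^2$-harmonic spinor is parallel and vanishes on the strictly positive region, hence vanishes identically by parallel transport on the connected $X$ --- yields $\mathrm{ind}(D_{E^+}^+)=\mathrm{ind}(D_F^+)=0$.

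Since $F$ and $E^+$ coincide outside the compact set $f^{-1}(S^n\backslash W_{\delta_0}(\Sigma))$ and both operators are strictly positive at infinity, the Gromov--Lawson relative index theorem applies:
\[
0=\mathrm{ind}(D_F^+)-\mathrm{ind}(D_{E^+}^+)=\int_X\hat{A}(TX)\wedge\bigl(\hat{ch}(F)-\hat{ch}(E^+)\bigr).
\]
As in the proof of Theorem \ref{tri}, $\hat{ch}(E^+)=h^*ch_{n/2}(E_0^+)$ because the intermediate Chern characters of a bundle pulled back from $S^n$ vanish, so $\int_X\hat{A}(TX)\wedge\hat{ch}(E^+)=\deg(h)\int_{S^n}ch_{n/2}(E_0^+)=\pm\int_{S^n}ch_{n/2}(E_0^+)$ (the collapsing map has degree $1$ and $f$ is a diffeomorphism). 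For $f$ orientation-preserving this identifies $\int_X\hat{A}(TX)\wedge\hat{ch}(F)$ with $\int_{S^n}ch_{n/2}(E_0^+)$, contradicting condition (3).

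The one subtle point is the joint choice of $\delta_0$: one needs $\delta_0$ small enough for $dil(h)\leq 1$ (so Llarull's estimate applies) while retaining the bundle $F$ guaranteed by goodness. I would resolve this either by verifying that the construction of $F$ in the sequel accommodates arbitrarily small $\delta_0$, or by first perturbing $f$ as in the proof of Theorem \ref{tri} so that $dil(h)\leq 1$ holds for any fixed small $\delta_0$.
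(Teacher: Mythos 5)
Your proposal is correct and takes essentially the same route as the paper: contradiction, pulled-back Llarull bundle $E^+=h^*E_0^+$ versus the bundle $F$ from goodness, vanishing of both indices via the Bochner argument on $f^{-1}(W_\delta(\Sigma))$, and the Gromov--Lawson relative index theorem yielding a contradiction with condition (3). On the subtle point you flag, the paper's resolution is the second of the two options you mention, made precise: it keeps $\delta_0$ and $F$ fixed, chooses $\delta<\delta_0$ small enough that $dil(\epsilon_\delta\circ f)\leq 1$, and then observes that $\epsilon_{\delta_0}|_{W_\delta(\Sigma)}=\epsilon_\delta|_{W_\delta(\Sigma)}$ (both maps are normal projection onto $\Sigma$ there), so condition (1) of goodness descends to the smaller neighborhood $W_\delta(\Sigma)$ without needing to re-verify anything about $F$.
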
 
\begin{proof}
By contradiction, suppose $\inf_{x\in S^n\backslash\Sigma} Sc(g)_x\geq n(n-1)$. Set $X:=(S^n\backslash\Sigma, g)$ and let $h:X\to S^n$ be the composition $h:=\epsilon_{\delta}\circ f$. Let $\epsilon_{\delta}:\mathbb{S}^n\to\mathbb{S}^n$ be the collapsing map.\\
By assumption, there exists a hermitian vector bundle $F$ with connection over $S^n\backslash\Sigma$ which satisfies the following properties.\\
\begin{enumerate}
\item $F|_{f^{-1}(W_{\delta_0}(\Sigma))}\cong (\epsilon_{\delta_0}|_{W_{\delta_0}(\Sigma)\cap (S^n\backslash\Sigma)}\circ f)^*(E_0^+|_{\Sigma})$
\item  $||\mathcal{R}^{F}||\leq \frac{1}{4} n(n-1)$
\item $\displaystyle\int_{S^n\backslash\Sigma}\hat{A}(TX)\wedge\hat{ch}(F)\neq \int_{S^{2m}} ch_m(E_0^+)$
\end{enumerate}
We choose $\delta(<\delta_0)$ so that $dil(h)$ satisfies $dil(h)\leq 1$. It is possible because \[dil(h)\leq dil(\epsilon_{\delta})\cdot dil(f)\]
We now set $E_0:=P_{Spin(n)}(S^n)\times_{\lambda}\mathbb{C}l_n$ and obtain the pull-back bundle $E=h^*E_0$ over $X$ with connection.\\
Recall that 
\[\displaystyle\int_{S^{n}} ch_m(E_0^+)\neq 0\]
\[\mathcal{R}_x^E\geq -\frac{1}{4}n(n-1) ,x\in X\]
Note that $E_0|_\Sigma$ is flat because $\mathrm{dim}\Sigma=1$ and the curvature 2-form of $E_0|_\Sigma$ vanishes. Therefore $E|_{f^{-1}(W_{\delta}(\Sigma))}$ is flat because $h$ maps $f^{-1}(W_{\delta}(\Sigma))$ onto $\Sigma$ and the connection on $E|_{f^{-1}(W_{\delta}(\Sigma))}$ is the pull-back connection.\\
To apply the relative index theorem, we first check that $D_{E^+}$ and $D_F$ are Fredholm.\\
By theorem \ref{lich},
\[(D_{E^+})^2=\nabla^*\nabla+\frac{1}{4}Sc(g)+\mathcal{R}^{E^+}\]
\[(D_F)^2=\nabla^*\nabla+\frac{1}{4}Sc(g)+\mathcal{R}^F\]
we have $\mathcal{R}^F\equiv 0$ and $\mathcal{R}^{E^+}\equiv 0$ on $W_\delta(\Sigma)\cap X$ because $F$ and $E^+$ are flat on $W_\delta(\Sigma)\cap X$. Thus $D_{E^+}$ and $D_F$ are Fredholm.\\
Secondly, we have $F|_{f^{-1}(W_{\delta}(\Sigma))}\cong (\epsilon_{\delta}|_{W_{\delta}(\Sigma)\cap (S^n\backslash\Sigma)}\circ f)^*(E_0^+|_{\Sigma})$ because we have (1) and $\epsilon_{\delta_0}|_{W_\delta(\Sigma)}=\epsilon_\delta|_{W_\delta(\Sigma)}$.\\
Therefore, by the relative index theorem,
\begin{equation}\label{rel 2}
\mathrm{ind}(D_{E^{+}}^{+})-\mathrm{ind}(D_F^{+})=\int_X \hat{A}(TX)\wedge (ch(E^{+})-ch(F))
\end{equation}
We first compute the right-hand side of \eqref{rel 2},
\begin{align}\label{even rel 2}
\int_X \hat{A}(TX)\wedge (ch(E^{+})-ch(F))&=\int_X \hat{A}(TX)\wedge (\hat{ch}(E^{+})-\hat{ch}(F))\\
&=\int_X \hat{A}(TX)\wedge f^*ch_m(E_0^+)-\int_X \hat{A}(TX)\wedge \hat{ch}(F)\notag \\
&=\int_{S^{2m}} ch_m(E_0^+)-\int_X \hat{A}(TX)\wedge \hat{ch}(F)\notag
\end{align}
On the other hand, $\mathrm{ind}(D_{E^{+}}^{+})=\mathrm{ind}(D_F^{+})=0$ because 
\[\frac{1}{4}Sc(g)+\mathcal{R}^{E^+}\geq 0\]
\[\frac{1}{4}Sc(g)+\mathcal{R}^F\geq 0\]
and
\[\frac{1}{4}Sc(g)+\mathcal{R}^{E^+}\geq \frac{1}{4}n(n-1) \ \text{on $W_{\delta}(\Sigma)\cap X$}\]
\[\frac{1}{4}Sc(g)+\mathcal{R}^{F}\geq\frac{1}{4}n(n-1) \ \text{on $W_{\delta}(\Sigma)\cap X$}\]
which is a contradiction. This completes the proof of Theorem \ref{adm}.
\end{proof}
\subsection{Computations of characteristic classes}
\begin{prop}\label{drham}
Let $\Sigma\subset S^n$ be an embedded circle and $n:=2m\geq 4$.
Suppose $X:=S^n\backslash\Sigma$ is diffeomorphic to $(\mathbb{R}^{n-1}\backslash\{0\})\times \mathbb{R}$ and $F$ is a Hermitian vector bundle over $X$ with connection such that $F|_{W_\delta(\Sigma)\cap X}$ is flat on $W_\delta(\Sigma)\cap X$ for a real number $\delta>0$.\\
Then
\[\int_X \hat{A}(TX)\wedge\hat{ch}(F)=\begin{cases}
 \int_X ch_m(F) & (n\equiv 0 \mod 4) \\
 \int_X ch_m(F)+\int_X \hat{A}_{\frac{n-2}{4}}(TX)\wedge ch_2(F) & (n\equiv 2 \mod 4)
\end{cases}\]
Note the above integral is well-defined because $\hat{ch}(F)$ is compactly supported.
\end{prop}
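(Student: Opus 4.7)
The plan is to view $\int_X \hat A(TX)\wedge\hat{ch}(F)$ as the pairing of a de Rham class on $X$ with a compactly supported class, and to exploit the extreme sparsity of $H^*(X)$ coming from the assumption $X\cong(\mathbb{R}^{n-1}\setminus\{0\})\times\mathbb{R}\simeq S^{n-2}$. All but one or two terms of the expansion will be killed by Stokes' theorem.

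First I would check that $\hat{ch}(F)$ is compactly supported. By Chern--Weil, each $ch_k(F)$ is a universal polynomial of degree $k$ in the curvature $R^F$, so the flatness of $F$ on $W_\delta(\Sigma)\cap X$ forces $ch_k(F)\equiv 0$ there for every $k\geq 1$. Since $W_\delta(\Sigma)\cap X$ is (up to shrinking $\delta$) the unique end of $X$, the closed form $\hat{ch}(F)=\sum_{k\geq 1} ch_k(F)$ has compact support.

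Next, using $H^*(X;\mathbb{R})\cong H^*(S^{n-2};\mathbb{R})$, which is one-dimensional in degrees $0$ and $n-2$ and zero otherwise, I would decompose $\hat A(TX)=1+\sum_{l\geq 1}\hat A_l(TX)$ with $\hat A_l\in\Omega^{4l}(X)$ closed. For every $l\geq 1$ with $4l\neq n-2$ the class $[\hat A_l]$ vanishes, so one may pick a smooth primitive $\eta_l$ with $d\eta_l=\hat A_l$. Because $\hat{ch}(F)$ is closed and compactly supported, $\eta_l\wedge\hat{ch}(F)$ is compactly supported, and Stokes' theorem gives
\[\int_X \hat A_l(TX)\wedge\hat{ch}(F)=\int_X d\bigl(\eta_l\wedge\hat{ch}(F)\bigr)=0.\]

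What remains is the $l=0$ summand, which contributes $\int_X \hat{ch}(F)=\int_X ch_m(F)$ (only the top-degree piece pairs with the volume form); this finishes the case $n\equiv 0\pmod 4$. If $n\equiv 2\pmod 4$, then $l=(n-2)/4$ is a positive integer, and the extra term survives: $\hat A_{(n-2)/4}$ has degree $n-2$, so the only piece of $\hat{ch}(F)$ it can pair with in top degree is the degree-$2$ component $ch_1(F)$, yielding $\int_X \hat A_{(n-2)/4}(TX)\wedge ch_1(F)$, which matches the statement (noting that, by this degree count, the factor denoted $ch_2(F)$ in the proposition must be the degree-$2$ Chern character component). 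The only step requiring genuine care is the Stokes argument: one must confirm global smooth existence of the primitives $\eta_l$ (standard de Rham theory, since $H^{4l}(X)=0$ for the relevant $l$) and compact support of $\eta_l\wedge\hat{ch}(F)$ (immediate from compact support of $\hat{ch}(F)$). Everything else is bookkeeping on total form degrees.
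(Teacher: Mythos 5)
Your proof takes essentially the same approach as the paper's: use the sparsity of $H^*_{dR}(X)\cong H^*(S^{n-2})$ to write $\hat A_l(TX)=d\eta_l$ for every $l\geq 1$ with $4l\neq n-2$, then kill those terms by Stokes' theorem using the compact support of $\hat{ch}(F)$ (which you correctly justify from flatness of $F$ on the end $W_\delta(\Sigma)\cap X$). So the core argument matches.

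One point is worth flagging explicitly. Your degree count is correct, and the paper's formula in the $n\equiv 2\pmod 4$ case is not. Since $\hat A_{(n-2)/4}(TX)$ has degree $n-2$, the surviving cross-term in top degree $n$ must pair it with the degree-$2$ component of $\hat{ch}(F)$, namely $ch_1(F)$ in the paper's own convention $ch_i\in H^{2i}(X)$ (stated just before equation \eqref{even rel}). Both the proposition and the paper's proof instead write $ch_2(F)$, which has degree $4$ and would make the integrand of degree $n+2$, hence integrate to zero. This appears to be a slip in the paper rather than in your argument; it is harmless for the rest of the paper since Theorem \ref{ext} and the main theorem only invoke the proposition in the case $n\equiv 0\pmod 4$.
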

\begin{proof}
Using $X\cong (\mathbb{R}^{n-1}\backslash\{0\})\times \mathbb{R}\simeq S^{n-2}$, the de Rham cohomology group $H_{dR}^{*}(X)$ is $H_{dR}^i(X)=0$ when $1\leq i <n-2$.
Recall that $\hat{A}(TX)=1+\hat{A}_1(TX)+\dots +\hat{A}_{\lfloor\frac{m}{2}\rfloor}(TX)$.\\
If m is even $m=2k$,
There exists a differential form $\eta_{i}$ on $X$ such that 
\[\hat{A}_i(TX)=d\eta_i\]
for $1\leq i \leq k $.\\
\begin{align}
\int_X \hat{A}(TX)\wedge\hat{ch}(F)=&\int_X ch_m(F)+\sum_{1\leq i \leq k}\int_X \hat{A}_i(TX)\wedge \hat{ch}(F)\notag
\end{align}
and by the Stokes theorem, \[\int_X \hat{A}(TX)\wedge\hat{ch}(F)=\int_X d(\eta_i\wedge\hat{ch}(F))=0\] for $1\leq i \leq k$.
\\
If $m$ is odd $m=2k+1$,
There exists a differential form $\eta_{i}$ on $X$ such that 
\[\hat{A}_i(TX)=d\eta_i\]
for $1\leq i \leq k-1$.\\
\begin{equation}
\int_X \hat{A}(TX)\wedge\hat{ch}(F)=\int_X ch_m(F)+\sum_{1\leq i \leq \lfloor \frac{m}{2}\rfloor}\int_X \hat{A}_i(TX)\wedge \hat{ch}(F)+\int_X \hat{A}_k(TX)\wedge \hat{ch}(F)\notag
\end{equation}
Therefore 
\[\int_X \hat{A}(TX)\wedge\hat{ch}(F)=\int_X ch_m(F)+\int_X \hat{A}_k(TX)\wedge ch_2(F)\]
This completes the proof.
\end{proof}
Next we take the monodromy transformation of $P_{Spin(n)}(S^n)|_\Sigma$ into account. We first consider the flat connection on $E_0^+|_\Sigma$.

\begin{prop}\label{mon}
Let $(V, \nabla)$ be a Hermitian vector bundle over $S^1$ with connection ($\mathrm{rank}(V)=N$) and let $A\in U(N)$ be the image of the identity under the monodoromy homomorphism $\pi_1(S^1)\to U(N),1\mapsto A$.\\
Then there exist complex line bundles $\{(L^i,\nabla^i)\}$ with connection such that
\[V\cong \bigoplus_{i=1}^N (L^i,\nabla^i)\]
and\[(L^i, \nabla^i)\cong (\underline{\mathbb{C}},d-2\pi\sqrt{-1}\theta_i dt)\]
where $\{e^{2\pi\sqrt{-1}\theta_i}\}_{1\leq i\leq N}$ are eigenvalues of $A$. (We denote $Hol(V):=\{e^{2\pi\sqrt{-1}\theta_i}\}$.)
\end{prop}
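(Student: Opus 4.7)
Since $V$ is a Hermitian vector bundle with a unitary connection, parallel transport around the loop defines the unitary monodromy $A\in U(N)$. The plan is to diagonalize $A$ by a unitary change of frame, extend each eigenvector by parallel transport on the universal cover, and then rescale by a scalar phase so that the resulting sections descend to $S^1$ and trivialize a rank-one summand with explicit connection form.

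First I would pick a basepoint $p\in S^1$, identify $V_p\cong \mathbb{C}^N$, and use the spectral theorem for the unitary $A$ to obtain an orthonormal eigenbasis $\{v_i\}_{i=1}^N$ with $Av_i=e^{2\pi\sqrt{-1}\theta_i}v_i$. Then, passing to the universal cover $\pi:\mathbb{R}\to S^1$, $t\mapsto [t]$, I would extend each $v_i$ to a smooth section $\tilde v_i:\mathbb{R}\to \pi^*V$ by $\pi^*\nabla$-parallel transport from $t=0$. Because $\nabla$ is unitary, parallel transport preserves the Hermitian inner product, so $\{\tilde v_i(t)\}$ is orthonormal for every $t\in\mathbb{R}$, and by construction $\tilde v_i(t+1)=A\tilde v_i(t)=e^{2\pi\sqrt{-1}\theta_i}\tilde v_i(t)$.

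Next I would define
\[
\sigma_i(t) := e^{-2\pi\sqrt{-1}\theta_i t}\,\tilde v_i(t),
\]
and verify $\sigma_i(t+1)=\sigma_i(t)$, so that $\sigma_i$ descends to a smooth unit section of $V$ over $S^1$. Since $\tilde v_i$ is $\pi^*\nabla$-parallel, the Leibniz rule gives
\[
\nabla\sigma_i = d(e^{-2\pi\sqrt{-1}\theta_i t})\otimes\tilde v_i + e^{-2\pi\sqrt{-1}\theta_i t}\,\nabla\tilde v_i = -2\pi\sqrt{-1}\theta_i\,\sigma_i\,dt .
\]
Setting $L^i:=\mathrm{span}_{\mathbb{C}}(\sigma_i)$ with the induced connection $\nabla^i$, the global trivialization $\sigma_i\mapsto 1$ identifies $(L^i,\nabla^i)$ with $(\underline{\mathbb{C}},\,d-2\pi\sqrt{-1}\theta_i\,dt)$.

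Finally, since $\{\sigma_i\}$ is pointwise orthonormal, the natural map $\bigoplus_i L^i\to V$ is a fiberwise unitary isomorphism; compatibility with the connections is immediate from the previous paragraph. I expect no genuine obstacle: the only subtlety is ensuring that the phase twist produces a well-defined section on $S^1$ (rather than just on the universal cover), but this is forced by the choice of eigenbasis of $A$. Once this is checked, the entire statement reduces to routine bookkeeping made available by the unitarity of $A$ and of parallel transport.
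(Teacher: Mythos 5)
Your proof is correct and follows essentially the same route as the paper: both diagonalize the unitary monodromy $A$ via the spectral theorem, obtain the decomposition $V\cong\bigoplus_i L^i$ into eigenline bundles, and trivialize each $L^i$ by twisting the parallel section with the scalar phase $e^{-2\pi\sqrt{-1}\theta_i t}$, yielding $\nabla^i = d-2\pi\sqrt{-1}\theta_i\,dt$ in that frame. The paper packages the setup through the associated-bundle description $V\cong\mathbb{R}\times_{\mathbb{Z}}\mathbb{C}^N$ from the classification of flat bundles, whereas you work directly with parallel transport on the universal cover, but the computations and conclusions are the same.
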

\begin{proof}
Using the classification of flat bundles,
\[V\cong\mathbb{R}\times_{\mathbb{Z}}\mathbb{C}^N\]
as hermitian vector bundle over $\mathbb{R}/\mathbb{Z}\cong S^1$ with connection.\\
Here $\mathbb{Z}\curvearrowright\mathbb{C}^N$ is given by the multiplication of $A$, $n\cdot v=A^nv\ (n\in\mathbb{Z},\ v\in\mathbb{C}^N)$. The $\mathbb{C}^N$ is decomposed into eigenspaces of $A$.
\[\mathbb{C}^N=\bigoplus_{i=1}^N \mathbb{C}v_i\]
where eigenvalues and eigenvectors of $A$ are $\{e^{2\pi\sqrt{-1}\theta_i}\}_{1\leq i\leq N}$ and $Av_i=e^{2\pi\sqrt{-1}\theta_i}v_i\ (\theta_i\in\mathbb{R})$.\\
Set $L^i\cong\mathbb{R}\times_{\mathbb{Z}}\mathbb{C}_i$ and $\nabla^{i}$ denotes the induced connection by V.\\
Then
\[V\cong \bigoplus_{i=1}^N (L^i,\nabla^i)\]
We have a local parallel section $e_i$ of $L^i$ given by \[e_i([t])=[(t,1)]\ (t\in \mathbb{R})\]
On the other hand,
\[s_i([t]):=[(t,e^{-2\pi\sqrt{-1}\theta_it})]\ (t\in\mathbb{R})\]
gives a global trivialization of $L^i$ and
\begin{align}
\nabla_{\frac{d}{dt}}^{i}s_i&=\nabla_{\frac{d}{dt}}^{i}(e^{-2\pi\sqrt{-1}\theta_i t}e_i)\\
&=\frac{d}{dt}e^{-2\pi\sqrt{-1}\theta_i t}e_i+e^{-2\pi\sqrt{-1}\theta_it}\nabla_{\frac{d}{dt}}^{i}e_i\notag\\
&=-2\pi\sqrt{-1}\theta_is_i\notag
\end{align}
Therefore we have $(L^i,\nabla^{i})\cong(\underline{\mathbb{C}}^N,d-2\pi\sqrt{-1}\theta_i dt)$ and completes the proof.
\end{proof}

\subsection{Nice embeddings of $S^1$ into $S^n$}
\begin{dfn}
Let $\iota:S^1\to S^n$ be an embedding and set $\Sigma:=\iota(S^1)$, $X:=S^n\backslash\Sigma$. ($n\geq3$)\\
(1)We say $\iota$ is \textbf{nice} if it satisfies the following settings.\\
There exists a diffeomorphism $\phi:X\xrightarrow{\cong}S^{n-2}\times \mathbb{R}^2$ and $\delta'>0$ which satisfies 
\[\phi(W_\delta(\Sigma)\cap X)\subset S^{n-2}\times(\mathbb{R}^2\backslash\{0\})\]
for $\delta\in (0,\delta')$ and
$$
\begin{CD}
W_\delta(\Sigma)\cap X  @>\epsilon_{\delta}>> S^n \\
@V{\phi |_{W_\delta(\Sigma)\cap X}}VV @A{\iota}AA \\
 S^{n-2}\times(\mathbb{R}^2\backslash\{0\})@>\alpha>> S^1
\end{CD}
$$
where $\alpha:S^{n-2}\times(\mathbb{R}^2\backslash\{0\})\to S^1$ is the composition
\[S^{n-2}\times(\mathbb{R}^2\backslash\{0\})\to \mathbb{R}^2\backslash\{0\}\cong S^1\times \mathbb{R}\to S^1\]
(2)Suppose $\iota$ is nice and $f: (X, g)\to (X, g_{std})$ is a $1$-contracting diffeomorphism. We fix the notations $X^*:=\phi^{-1}(S^{n-2}\times(\mathbb{R}^2\backslash\{0\}))$, $X_0:=\phi^{-1}(S^{n-2}\times\{0\})$ and $\beta:=\alpha\circ\phi|_{X^*}\circ f$.
\end{dfn}
\begin{prop}\label{nice}
Suppose $n\geq 4$, every embedding $\iota:S^1\to S^n$ is nice.(\cite{hirsch2012differential})
\end{prop}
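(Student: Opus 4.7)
Since $n \geq 4$, the codimension of $\iota(S^{1})$ is $n-1 \geq 3$, so by the smooth unknotting theorem for spheres in codimension at least three (the content of the cited result from Hirsch), $\iota$ is ambient isotopic to the standard great-circle embedding $\iota_{0}\colon S^{1}\hookrightarrow S^{n}$ whose image is
\[\Sigma_{0}=\{(x,0)\in\mathbb{R}^{2}\oplus\mathbb{R}^{n-1}:|x|=1\}\subset S^{n}.\]
The plan is to use this reduction to the standard model and then verify the three items in the definition of "nice" by an explicit computation. Let $\Phi\colon S^{n}\to S^{n}$ be a diffeomorphism with $\Phi\circ\iota=\iota_{0}$; by the uniqueness of tubular neighborhoods I may further arrange that $\Phi$ carries $W_{\delta}(\Sigma)$ onto $W_{\delta}(\Sigma_{0})$ for every $\delta$ in some interval $(0,\delta')$.

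For the standard embedding I write down the model diffeomorphism explicitly. With $S^{n}=\{(x,y)\in\mathbb{R}^{2}\oplus\mathbb{R}^{n-1}:|x|^{2}+|y|^{2}=1\}$ and $X_{0}:=S^{n}\setminus\Sigma_{0}=\{y\neq 0\}$, the map
\[\phi_{0}\colon X_{0}\longrightarrow S^{n-2}\times\mathbb{R}^{2},\qquad (x,y)\longmapsto\left(\tfrac{y}{|y|},\,\tfrac{x}{|y|}\right)\]
is a diffeomorphism (the smooth inverse follows from $|x|^{2}+|y|^{2}=1$). The standard tubular neighborhood $W_{\delta}(\Sigma_{0})\cap X_{0}=\{(x,y)\in S^{n}:0<|y|<\sin\delta\}$ is mapped into $S^{n-2}\times(\mathbb{R}^{2}\setminus\{0\})$, so condition (1) of the definition of "nice" holds for $\phi_{0}$. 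Taking the collapsing map $\epsilon^{0}_{\delta}$ of $\Sigma_{0}$ to be the one produced by Definition \ref{epsilondelta}, so that on $W_{\delta}(\Sigma_{0})$ it is the normal projection $(x,y)\mapsto(x/|x|,0)$, a direct calculation gives
\[\iota_{0}\circ\alpha\circ\phi_{0}(x,y)=\iota_{0}\!\left(\tfrac{x}{|x|}\right)=\left(\tfrac{x}{|x|},0\right)=\epsilon^{0}_{\delta}(x,y),\]
verifying the commutative square in the standard case.

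Finally, I transfer the result to the general $\Sigma$ via $\Phi$. Set $\phi:=\phi_{0}\circ\Phi|_{X}$ and $\epsilon_{\delta}:=\Phi^{-1}\circ\epsilon^{0}_{\delta}\circ\Phi$; the latter is again a collapsing map in the sense of Definition \ref{epsilondelta} because the three properties listed there are invariant under conjugation by a diffeomorphism. Then $\phi(W_{\delta}(\Sigma)\cap X)\subset\phi_{0}(W_{\delta}(\Sigma_{0})\cap X_{0})\subset S^{n-2}\times(\mathbb{R}^{2}\setminus\{0\})$, and using $\Phi\circ\iota=\iota_{0}$ together with the standard-case identity,
\[\iota\circ\alpha\circ\phi=\Phi^{-1}\circ\iota_{0}\circ\alpha\circ\phi_{0}\circ\Phi=\Phi^{-1}\circ\epsilon^{0}_{\delta}\circ\Phi=\epsilon_{\delta},\]
which is the required commutative square. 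The main obstacle is really just the invocation of smooth unknotting in codimension $\geq 3$ together with the bookkeeping needed to arrange $\Phi$ to intertwine the normal-exponential data of $\Sigma$ and $\Sigma_{0}$; everything else reduces to the coordinate calculation above.
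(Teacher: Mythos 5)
Your overall strategy coincides with the paper's: reduce to the standard great circle $\iota_0$ via the isotopy/unknotting theorem in codimension $\geq 3$ (this is exactly what the paper's Step 2 invokes when it says $\iota$ is isotopic to $\iota_0$ for $n\geq 4$), write down an explicit model diffeomorphism for the standard case, and transfer the statement to a general $\Sigma$ via a diffeomorphism of $S^n$ produced by the uniqueness of tubular neighborhoods. Your model formula $\phi_0(x,y)=(y/|y|,\,x/|y|)$ is a cleaner closed-form alternative to the paper's spherical-suspension coordinates $h(p,q,t)=(\cos t)p+(\sin t)q$; the verification $\iota_0\circ\alpha\circ\phi_0(x,y)=(x/|x|,0)$ is correct and is the content of the paper's Step 1.

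There is, however, one genuine slip in the transfer step. You \emph{redefine} $\epsilon_\delta:=\Phi^{-1}\circ\epsilon_\delta^0\circ\Phi$ and justify this by asserting that the three properties in Definition \ref{epsilondelta} "are invariant under conjugation by a diffeomorphism." Property (3), namely $\limsup_{\delta\to 0}\operatorname{dil}(\epsilon_\delta)\leq 1$, is \emph{not} conjugation-invariant: for a non-isometric $\Phi$ one only gets $\operatorname{dil}(\Phi^{-1}\circ\epsilon_\delta^0\circ\Phi)\leq \operatorname{dil}(\Phi^{-1})\cdot\operatorname{dil}(\epsilon_\delta^0)\cdot\operatorname{dil}(\Phi)$, and the product $\operatorname{dil}(\Phi^{-1})\cdot\operatorname{dil}(\Phi)$ need not tend to $1$. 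Moreover, the definition of "nice" uses the collapsing map of $\Sigma$ already fixed by Definition \ref{epsilondelta}, so you are not free to substitute a conjugate. Fortunately this can be repaired without the false claim: on the inner neighborhood $W_\delta(\Sigma)$ the genuine collapsing map $\epsilon_\delta$ of $\Sigma$ \emph{is} the normal projection onto $\Sigma$, and when $\Phi$ is chosen via tubular-neighborhood uniqueness to intertwine the normal projection onto $\Sigma$ with that onto $\Sigma_0$, one has $\epsilon_\delta=\Phi^{-1}\circ\epsilon_\delta^0\circ\Phi$ \emph{on $W_\delta(\Sigma)$} -- and that is the only locus where commutativity of the square is demanded. (Relatedly, the parenthetical claim that $\Phi$ carries $W_\delta(\Sigma)$ \emph{onto} $W_\delta(\Sigma_0)$ for every small $\delta$ is stronger than what tubular-neighborhood uniqueness gives; one should only require $\Phi$ to intertwine the fiberwise structure, which is what actually enters the computation.) With these two small adjustments your argument matches the paper's.
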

\begin{proof}
(\textbf{Step1.})
Let $\iota_0:S^1\to S^n\subset \mathbb{R}^{n+1}$ be the standard embedding \[\iota_0(t):=(\cos t, \sin t, 0)\]
Set \[\Sigma_0:=\iota_0(S^1)=\{x\in\mathbb{R}^{n+1}| (x_0)^2+(x_1)^2=1, x_2=\cdots=x_n=0\}\]
and\[S^{n-2}:=\{x\in\mathbb{R}^{n+1}| (x_2)^2+\cdots+(x_n)^2=1, x_0=x_1=0\}\]
First, we show that $\iota_0$ is nice. We define $h:S^{n-2}\times S^1\times [0,\frac{\pi}{2}]\to S^n$ by
\[h(p,q,t):=(\cos t)p+(\sin t)q\]
Here, we identified $S^1\cong\Sigma_0$. Let $\tilde{h}$ be the quotient map
\[\tilde{h}:S^{-2}\times S^1\times[0,\frac{\pi}{2}]/(p,q,0)\sim(p,q',0)\to X\]
Notice that $\tilde{h}$ is a homeomorphism. We define $\phi:X\to S^{n-2}\times\mathbb{R}^2$ through
\[X\xrightarrow{\tilde{h}^{-1}}S^{n-2}\times S^1\times [0,\frac{\pi}{2})/\sim\cong S^{n-2}\times\mathbb{R}^2\]
Notice that we can take $\phi$ as a diffeomorphism. There exists a $\delta'>0$ such that
\[\phi(W_\delta(\Sigma)\cap X)\subset S^{n-2}\times (\mathbb{R}^2\backslash\{0\})\]
for any $\delta\in (0,\delta')$ and 
$$
\begin{CD}
W_\delta(\Sigma)\cap X  @>\epsilon_{\delta}>> S^n \\
@V{\phi |_{W_\delta(\Sigma)\cap X}}VV @A{\iota}AA \\
 S^{n-2}\times(\mathbb{R}^2\backslash\{0\})@>\alpha>> S^1
\end{CD}
$$
commutes.\\
(\textbf{Step2.})
Let $\iota:S^1\to S^n$ be any embedding. We show that $\iota$ is nice. Recall the fact that $\iota$ is isotopic to $\iota_0$ when $n\geq 4$. Moreover, by using the uniqueness of tubular neighborhoods, we can assume that there exists a diffeomorphism $\tilde{F}:S^n\to S^n$ such that
\[\tilde{F}\circ\nu(\iota_0)=\nu(\iota)\circ F\]
where $\nu(\iota_0):N_{\delta'}\to W_\delta'(\Sigma_0)$ and $\nu(\iota):\tilde{N}_{\delta'}\to W_\delta'(\Sigma)$ are 
normal tubular neighborhoods. ($N_{\delta'}=\{v\in N\Sigma_0;||v||<\delta'\},\ \tilde{N}_{\delta'}=\{v\in N\Sigma;||v||<\delta'\}$) $F:N_{\delta'}\xrightarrow{\cong}\tilde{N}_{\delta'}$ is a fiber bundle map over $S^n$.\\
Notice that 
$$
\begin{CD}
W_\delta(\Sigma)  @>\epsilon_{\delta}>> S^n \\
@A{\nu(\iota)}AA @A{\iota}AA \\
\tilde{N}_{\delta}@>0>> S^1
\end{CD}
$$
and
$$
\begin{CD}
W_\delta(\Sigma_0)  @>\epsilon_{\delta}(\Sigma_0)>> S^n \\
@V{\tilde{F}}VV @V{F}VV \\
W_\delta(\Sigma)@>\epsilon_{\delta}(\Sigma)>> S^n
\end{CD}
$$
commute. We are ready to show that $\iota$ is nice. Set $\tilde{\phi}:=\phi\circ F$ then
$$
\begin{CD}
W_\delta(\Sigma)\cap X  @>\epsilon_{\delta}>> S^n \\
@V{\tilde{\phi }|_{W_\delta(\Sigma)\cap X}}VV @A{\iota}AA \\
 S^{n-2}\times(\mathbb{R}^2\backslash\{0\})@>\alpha>> S^1
\end{CD}
$$
for any $\delta\in(0,\delta')$.
\end{proof}
\begin{thm}\label{ext}
Let $n$ be a natural number such that $n>4, n\equiv0 \mod 4$. There exists a constant $C_n>0$ and the following statement holds. (The choice of $C_n$ is stated in the proof and only depends on the dimension $n$.)\\\indent
Let $\Sigma$ be an embedded circle $\iota: S^1\to S^n$ and $X:=S^n\backslash\Sigma$. Let $f: (X, g)\to (X, g_{std})$ be a $1$-contracting diffeomorphism and strictly contracting at infinity.\\\indent
Then $(g, f, \Sigma)$ is good if there exists a positive real number $\delta_0>0$ which satisfies the following conditions.
\begin{enumerate}[label=(\arabic*)]
\item $\beta^*(d\theta)|_{W_{\delta_0}(\Sigma)\cap X}\in\Omega^1(W_{\delta_0}(\Sigma)\cap X)$ extends smoothly to a differential form $\omega\in\Omega^1(X)$.
\item $\mathrm{max}\{|log(\xi_i)|; \xi_i\in Hol(E_0^+|_\Sigma)\}\cdot||d\omega||_\infty\leq C_n$
\end{enumerate}
\end{thm}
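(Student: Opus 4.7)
The plan is to build the bundle $F$ witnessing goodness as a direct sum of hermitian line bundles on $X$, each carrying a connection constructed directly from the given 1-form $\omega$. First I apply Proposition \ref{mon} to $E_0^+|_\Sigma$ to obtain a unitary splitting $E_0^+|_\Sigma\cong\bigoplus_{i=1}^N(\underline{\mathbb{C}},\,d-2\pi\sqrt{-1}\theta_i\,dt)$, where $\{e^{2\pi\sqrt{-1}\theta_i}\}=Hol(E_0^+|_\Sigma)$; I choose representatives $\theta_i\in(-\tfrac12,\tfrac12]$, so that $2\pi|\theta_i|=|\log(\xi_i)|$. For each $i$ I define the hermitian line bundle with unitary connection
\[
(L^i,\nabla^i):=\bigl(\underline{\mathbb{C}},\,d-\sqrt{-1}\theta_i\omega\bigr)\quad\text{on }X,
\]
whose curvature equals $R^{L^i}=-\sqrt{-1}\theta_i\,d\omega$, and I set $F:=\bigoplus_{i=1}^N L^i$.

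Because $f$ is proper and $S^n\setminus W_{\delta_0}(\Sigma)$ is compact in $X$, I can shrink to a radius $0<\delta_0'\leq\delta_0$ such that $f^{-1}(W_{\delta_0'}(\Sigma))\subset W_{\delta_0}(\Sigma)\cap X$. On this neighborhood $\omega=\beta^*(d\theta)$ is closed, so each $L^i$, and therefore $F$, is flat there. For a small linking loop $\gamma$ of $\Sigma$ the niceness diagram forces $\int_\gamma\omega=\int_\gamma\beta^*(d\theta)=2\pi$ (since $\beta$ has degree one on such loops), so the holonomy of $L^i$ along $\gamma$ is $\exp(2\pi\sqrt{-1}\theta_i)=\xi_i$. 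These holonomies match those of $(\epsilon_{\delta_0'}\circ f)^*(E_0^+|_\Sigma)$ block by block, and since $\pi_1(W_{\delta_0'}(\Sigma)\cap X)$ is cyclic the two flat hermitian bundles with connection are isomorphic, establishing condition (1) of goodness. Condition (2) follows from the pointwise bound $\|R^F\|\leq\max_i|\theta_i|\cdot\|d\omega\|_\infty$ and the standard Clifford estimate $\|\mathcal{R}^F\|\leq\tfrac{n(n-1)}{2}\|R^F\|$: combined with hypothesis (2) this gives $\|\mathcal{R}^F\|\leq\tfrac{n(n-1)}{4\pi}C_n$, so taking $C_n:=\pi$ (or any appropriately small constant depending on $n$) makes $\|\mathcal{R}^F\|\leq\tfrac{1}{4}n(n-1)$.

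For condition (3), Proposition \ref{drham} (using $n\equiv0\bmod 4$) reduces the integral to $\int_X ch_{n/2}(F)$, and since $c_1(L^i)=\tfrac{\theta_i}{2\pi}\,d\omega$ this is a constant multiple of $\int_X(d\omega)^{n/2}$. Here is the decisive observation: because $\omega$ is closed on $W_{\delta_0}(\Sigma)\cap X$, the form $(d\omega)^{n/2-1}$ has compact support in $X$ (possible because $n\geq 8$ forces $n/2-1\geq 1$), and therefore so does $\omega\wedge(d\omega)^{n/2-1}$; Stokes' theorem then yields
\[
\int_X(d\omega)^{n/2}=\int_X d\bigl(\omega\wedge(d\omega)^{n/2-1}\bigr)=0.
\]
Hence $\int_X\hat{A}(TX)\wedge\hat{ch}(F)=0$, while $\int_{S^n}ch_{n/2}(E_0^+)\neq 0$ by Proposition \ref{Llarull}, giving condition (3). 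The main obstacle I anticipate is precisely this Stokes step: the naive a priori bound on $\int_X ch_{n/2}(F)$ involves the $g$-volume of $\mathrm{supp}(d\omega)$, which is not controlled by the hypotheses, and the crucial point is that the compact support of $d\omega$ together with the extension of $\omega$ to all of $X$ forces the integral to vanish outright rather than merely be small. A secondary technical item is pinning down that $\beta$ has degree one on small linking loops of $\Sigma$, but this reads off directly from the commutative diagram defining niceness.
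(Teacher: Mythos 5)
Your construction of $F$ is the same as the paper's: apply Proposition \ref{mon} to split $\iota^*E_0^+$ into flat line bundles with connection forms $\theta_i\,d\theta$ (up to a $2\pi$ normalization), pull back through $\beta$ to get the model on the end, and then use the hypothesized extension $\omega$ to extend each trivial line bundle with connection $d-c\theta_i\omega$ over all of $X$. The differences are in the verification of conditions (1) and (3). For condition (3), the paper argues cohomologically: $c_1(\tilde L^i)^2$ lies in $H^4_{\mathrm{cpt}}(X)\cong H^2(S^{n-2})=0$ because $n>4$, hence $c_1(\tilde L^i)^m=0$ in $H^n_{\mathrm{cpt}}(X)$. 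You instead give the Stokes computation $\int_X (d\omega)^{n/2}=\int_X d\bigl(\omega\wedge(d\omega)^{n/2-1}\bigr)=0$, using that $d\omega$ is compactly supported so that $\omega\wedge(d\omega)^{n/2-1}$ is too (for $n/2-1\geq1$). These are really the same vanishing at different levels; yours is the explicit primitive that underlies the paper's cohomological statement, and it avoids computing $H^4_{\mathrm{cpt}}(X)$. For condition (1), the paper simply observes that on the end $\omega=\beta^*(d\theta)$, so $F|_{W_\delta\cap X}=\bigoplus_i\beta^*L^i\cong\beta^*(\iota^*E_0^+)=(\epsilon_\delta\circ f)^*(E_0^+|_\Sigma)$ tautologically; your route via holonomy matching and cyclicity of $\pi_1(W_{\delta_0'}(\Sigma)\cap X)$ is a correct but longer detour, since the connection forms literally coincide once you restrict. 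Two small remarks: what you call a ``small linking loop of $\Sigma$'' is actually the longitude (the loop parallel to $\Sigma$, generating $\pi_1(W_\delta(\Sigma)\cap X)\cong\mathbb Z$), not a linking circle in the normal disk, which is nullhomotopic for $n\geq4$; and your estimate $\lVert\mathcal R^F\rVert\leq\max_i\lVert\mathcal R^{\tilde L^i}\rVert$ (using that $\mathcal R^F$ is block-diagonal) is actually sharper than the paper's $\sum_i\lVert\mathcal R^{\tilde L^i}\rVert\leq 2^{n-1}C_0\max_i\lVert\Omega^{\tilde L^i}\rVert$, which is where the paper's $C(n)\approx 2^n$ comes from.
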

\begin{proof}
We decompose $\iota^*E_0^+$ into line bundles via the monodromy transformation,
\[\iota^*E_0^+\cong\bigoplus_i L^i\]
where $L^i=(\underline{\mathbb{C}}, d-2\pi\sqrt{-1}\theta_i d\theta)$ and $Hol(E_0^+|_\Sigma):=\{\xi_i\}=\{e^{2\pi\sqrt{-1}\theta_i}\}$. (Choose $\theta_i$ in order to attain the minimum $|log(\xi_i)|:=min\{|w|;\mathrm{exp}(w)=\xi_i\}$.) \\
We obtain the pull-back bundle over $f^{-1}(W_{\delta_0}(\Sigma))$
\[\tilde{L^i}:=\beta^*L^i|_{W_{\delta_0}(\Sigma)\cap X}=(\underline{\mathbb{C}}, d-2\pi\sqrt{-1}\theta_i\beta^*(d\theta))\]
Since $\tilde{L^i}$ is trivial, there is no obstruction to extend the vector bundle $\tilde{L^i}$ to a vector bundle over $X$. ($\tilde{L^i}$ denotes the vector bundle over $X$, abuse of notation.)\\
By assumption, we can extend the connection over X. (Set $\omega_i:=2\pi\sqrt{-1}\theta_i\omega$.)
\[\tilde{L^i}\cong (\underline{\mathbb{C}}, d-2\pi\sqrt{-1}\theta_i \omega)=(\underline{\mathbb{C}}, d-\omega_i)\]
Next we check the conditions of good. Set $F:=\bigoplus_i \tilde{L}^i$ and fix any sufficiently small $\delta_0>\delta>0$.
\begin{enumerate}
\item 
\begin{align}
F|_{W_\delta(\Sigma)\cap X}&=\bigoplus_i \tilde{L}^i|_{W_\delta(\Sigma)\cap X}\notag\\
&=\bigoplus_i (\beta|_{W_\delta(\Sigma)\cap X})^*\tilde{L}^i \notag\\
&= \beta|_{W_\delta(\Sigma)\cap X}^*(\iota^*E_0^+)\notag\\
&= (\epsilon_{\delta}|_{W_\delta(\Sigma)\cap X})^*E_0^+\notag
\end{align}
\item Recall that the curvature of $\tilde{L^i}$ is $\Omega^{\tilde{L^i}}=d\omega_i+\omega_i\wedge\omega_i=d\omega_i$. Thus
\[||\mathcal{R}^F||\leq\sum_i||\mathcal{R}^{\tilde{L^i}}||\leq 2^{n-1}C_0\max_i\{||\Omega^{\tilde{L}^i}||_\infty\}\]
Here $C_0$ is a constant which depends only on the dimension $n$. (See section 2 in \cite{gromov1983positive}.) Therefore
\begin{align}
||\mathcal{R}^F||&\leq 2^{n-1}C_0\max_i\{||d\omega_i||_\infty\}\notag\\
&\leq 2^{n-1}C_0 \max\{|log(\xi_i)|\cdot||d\omega||_\infty; \xi_i\in Hol(E_0^+|_\Sigma)\}\notag\\
&\leq 2^{n-1}C_0C_n\leq \frac{1}{4}n(n-1)\notag
\end{align}
(We can choose $C_n$ so that the last inequality holds.)
\item Using Proposition \ref{drham}
\begin{align}
\int_X \hat{A}(TX)\wedge\hat{ch}(F)&=\sum_i\int_X \hat{A}(TX)\wedge\hat{ch}(\tilde{L}^i)\notag\\
&=\sum_i\int_X ch_m(\tilde{L}^i)\notag\\
&=\sum_i\int_X \frac{1}{m!}c_1(\tilde{L}^i)^m=0\notag
\end{align}
Notice that $\mathrm{dim}(X)>4$ and $c_1(\tilde{L}^i)^{2}\in H_{cpt}^{4}(X)=0$ so $c_1(\tilde{L}^i)^{m}=c_1(\tilde{L}^i)^{2}\wedge c_1(\tilde{L}^i)^{m-2}=0\in H_{cpt}^{n}(X)$.
\end{enumerate}
This completes the proof.
\end{proof}
\subsection{Proof of The Main Theorem}
Finally, we investigate the sufficient condition for the hypothesis of Theorem \ref{ext}. We restate our main theorem.
\begin{thm}\label{main2}
There exists a universal constant $C_n$ such that the following statement holds.
Let $\Sigma$ be an embedded circle $\iota: S^1\hookrightarrow S^n$ and $X:=S^n\backslash\Sigma$.
Let $g$ be a complete Riemannian metric on $X$.
\[\inf_{x\in X}Sc(g)_x<n(n-1)\]
holds if the following three conditions hold.
\begin{enumerate}[label=(\arabic*)]
\item $n>4$ and $n\equiv 0 \mod 4$.
\item Any real Lipschitz chain $C$ which represents the unit element $[C]$ in $H_2(S^n, W(\Sigma); \mathbb{R})$ satisfies 
\[Area_g(C)>C_n\cdot \max\{|log(\xi_i)|; \xi_i\in Hol(E_0^+|_\Sigma)\}\]
\item $f:(X,g)\to (X,g_{std})$ is a 1-contracting diffeomorphism.
\end{enumerate}
\end{thm}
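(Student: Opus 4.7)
The plan is to reduce Theorem \ref{main2} to Theorem \ref{ext} (and then to Theorem \ref{adm}). From hypothesis (2) we must construct a smooth $1$-form $\omega \in \Omega^1(X)$ that extends $\beta^*(d\theta)|_{W_{\delta_0}(\Sigma)\cap X}$ and satisfies the differential bound $\max_i|\log \xi_i|\cdot\|d\omega\|_\infty \leq C_{\mathrm{ext}}$, where $C_{\mathrm{ext}}$ denotes the constant appearing in Theorem \ref{ext}. The bridge between the metric/topological hypothesis (2) and this analytic estimate is a Hahn--Banach / Federer--Fleming duality between $L^\infty$-norms of $2$-forms and areas of the $2$-chains they pair against.

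First, as in the proof of Theorem \ref{tri}, by precomposing $f$ with a self-map of $\mathbb{S}^n$ that contracts away from a single point we may assume $f$ is strictly contracting at infinity; this does not modify $g$, so hypothesis (2) is preserved. Fix a small $\delta_0 < \delta(\Sigma)$ and a smooth cutoff $\chi:X\to[0,1]$ that equals $1$ on $W_{\delta_0/2}(\Sigma)\cap X$ and vanishes outside $W_{\delta_0}(\Sigma)\cap X$. Set $\omega_1 := \chi\cdot\beta^*(d\theta)$, extended by $0$ to all of $X$. Then $\omega_1 \in \Omega^1(X)$ agrees with $\beta^*(d\theta)$ on $W_{\delta_0/2}(\Sigma)\cap X$, and $\psi := d\omega_1 \in \Omega^2(X)$ is compactly supported and vanishes on $W_{\delta_0/2}(\Sigma)\cap X$. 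Every further smooth extension of $\beta^*(d\theta)|_{W_{\delta_0/2}(\Sigma)\cap X}$ has the form $\omega = \omega_1 - \eta$ with $\eta$ vanishing on $W_{\delta_0/2}(\Sigma)\cap X$, so our task is to choose $\eta$ minimizing $\|\psi - d\eta\|_\infty$.

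By excision and the long exact sequence, $H_2(X, W_{\delta_0/2}(\Sigma)\cap X;\mathbb{R})\cong H_2(S^n, W(\Sigma);\mathbb{R})\cong \mathbb{R}$, generated by a punctured $2$-disk whose boundary links $\Sigma$ once. For any real Lipschitz $2$-chain $C$ representing this generator, Stokes gives $\int_C(\psi - d\eta) = \int_{\partial C}\beta^*(d\theta) = \pm 1$, whence $\|\psi - d\eta\|_\infty \geq 1/\operatorname{Area}_g(C)$. The reverse (Hahn--Banach / Federer--Fleming) duality produces a smooth $\eta$ realizing a matching upper bound $\|\psi - d\eta\|_\infty \leq c(n)/\inf_C \operatorname{Area}_g(C)$ for a dimensional constant $c(n)\approx 2^n$, where the infimum is over all Lipschitz representatives of the generator. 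Choosing the constant $C_n$ of Theorem \ref{main2} to be $c(n)/C_{\mathrm{ext}}$, hypothesis (2) then forces $\max_i|\log \xi_i|\cdot\|d\omega\|_\infty < C_{\mathrm{ext}}$. Thus $(g,f,\Sigma)$ is good by Theorem \ref{ext}, and Theorem \ref{adm} delivers the scalar-curvature conclusion.

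The main obstacle is the duality step: producing a \emph{smooth} $\eta$ realizing the dimensional filling upper bound with explicit constants. The standard route passes through a polyhedral approximation of $X$, applies the Federer--Fleming deformation theorem to exchange arbitrary integral/real $2$-currents for cellular ones with controlled mass, dualizes the resulting filling inequality via Hahn--Banach in the pair $(\Omega^2_{L^\infty},\;\{\int_C\cdot\})$, and finally mollifies. This is also where the announced value $C_n\approx 2^n$ of the abstract originates.
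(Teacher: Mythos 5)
Your high-level strategy is the same as the paper's: reduce to Theorem \ref{ext} by producing a smooth extension $\omega$ of $\beta^*(d\theta)$ whose $\|d\omega\|_\infty$ is controlled by hypothesis (2), via the duality between filling areas of relative $2$-chains and $L^\infty$-norms of exact $2$-forms. The reduction to strictly contracting at infinity, the Stokes computation showing $\int_C d\omega = \pm 1$ for every Lipschitz relative $2$-cycle generating $H_2(S^n, W(\Sigma);\mathbb{R})$, and the final appeal to Theorem \ref{ext} all match. In this sense your proposal is correct.

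There are, however, two places where you deviate from (and misdiagnose) the paper's argument. First, the duality step you flag as the ``main obstacle'' is handled in the paper by Proposition \ref{ext2}, which is a direct Hahn--Banach argument on the normed spaces of real Lipschitz chains $\mathcal{S}_p$ (following Gromov--Lawson, Proposition~7.25), followed by mollification. No polyhedral approximation and no Federer--Fleming deformation theorem enter at all; those would both overcomplicate the step and are not what produces the extension here. Concretely, the paper introduces the filling quantity $e(R)=\inf\{M(T)\mid T\in S_2(X_{\leq R+1}, X_{[R,R+1]}),\ (\partial T)(\eta)=1\}$, shows hypothesis (2) forces $e(R)$ large for some $R$, and then Proposition \ref{ext2} converts this directly into the $L^\infty$ bound with constant essentially $1$ (not $2^n$).

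Second, and consequently, your attribution of the dimensional constant $C(n)\approx 2^n$ to the Federer--Fleming step is wrong. That constant originates entirely in the proof of Theorem \ref{ext}, in the estimate $\|\mathcal{R}^F\|\leq 2^{n-1}C_0\max_i\|\Omega^{\tilde L^i}\|_\infty$: the factor $2^{n-1}$ is the rank of $E_0^+$ (i.e., the number of line-bundle summands $\tilde L^i$ of $F$), and $C_0$ comes from comparing the Lichnerowicz curvature operator $\mathcal{R}^F$ to the curvature $2$-form. The requirement $2^{n-1}C_0 C_n\leq\frac{1}{4}n(n-1)$ pins down the constant in Theorem \ref{ext}, and inverting it gives $C(n)\approx 2^n$. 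In your notation, $c(n)$ in the duality inequality is $O(1)$; the large constant sits in $C_{\mathrm{ext}}^{-1}$, not in $c(n)$. This does not break your proof, but your explanation of where the constant comes from should be corrected, and the duality step should be recognized as a single clean Hahn--Banach application rather than a heavier geometric-measure-theoretic argument.
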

Before proving the main theorem, we begin by setting some notations.
\begin{itemize}
\item For $\phi\in\Omega^p(Y)$, \[||\phi||_\infty:=sup\{\frac{\phi(v_1,\dots,v_k)}{|v_1|\dots|v_k|};
p\in Y,v_1,\dots,v_k\in T_pY\backslash\{0\}\}\]
\item The space of $p$-dimensional currents with support in $Y$;
\[\mathbf{E}_p(Y):=\Omega^p(Y)^\ast\]
\item For $T\in\mathbf{E}_p(Y)$, \[M(T):=sup\{T(\phi)|||\phi||_\infty\leq 1\}\]
\item Set the normed space $\mathcal{E}_p(Y):=\{T\in\mathbf{E}_p(Y);M(T)<\infty\}$ with the mass norm $M(\cdot)$.
\item Set the subspace $S_p(Y)$ of $\mathcal{E}_p(Y)$ as \[S_p(Y):=\{T\in\mathbf{E}_p(Y); \text{T is a real Lipschitz chain}\}\]
\item Set $S_2(X_{\leq R+1}, X_{[R, R+1]}):=\{T\in S_2(X_{\leq R+1}); \partial T\in S_1(X_{[R, R+1]})\}$
\end{itemize}
Define $\delta:X\to\mathbb{R}$ as $X\cong S^{n-2}\times\mathbb{R}^2\to\mathbb{R}$ where the second map is 
$S^{n-2}\times\mathbb{R}^2\to\mathbb{R},(p,q)\mapsto |q|$. Define $\pi: X\to S^{n-2}$ as $X\cong S^{n-2}\times\mathbb{R}^2\to S^{n-2}$ where $\phi\circ f$ gives the diffeomorphism $X\cong S^{n-2}\times\mathbb{R}^2$ and $f:(X,g)\to (X,g_{std})$ is  the 1-contracting diffeomorphism given in the hypothesis of Theorem \ref{main2}. We use the following notations.\\
\[X_{[R_0,R]}=\delta^{-1}([R_0,R]),\ X_R=\delta^{-1}(R),\ X_{>R}=\delta^{-1}((R,\infty))\]
and \[X|_U=\pi^{-1}(U),\ X|_p=\pi^{-1}(p)\]
Set $\eta:=\beta^*d\theta$.
We recall the following Proposition which is essentially the same as Proposition 7.25. in \cite{gromov1983positive}.
\begin{prop}\label{ext2}
Let $\lambda$ be any positive real number.
  Suppose $\eta\in\Omega^1(X_{[R,R+1]})$ with the property that
  \[|(\partial T)(\eta)|\leq\lambda M(T)\]
  for all $T\in S_2(X_{\leq R+1}, X_{[R, R+1]})$. Then there exists an extension of $\eta$ to all of $Y$, that is, there exists $\tilde{\eta}\in\Omega^1(Y)$
  such that $i^\ast \tilde{\eta}=\eta$, with the property that
  \[||d\tilde{\eta}||_\infty\leq\lambda\]
\end{prop}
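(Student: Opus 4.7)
The plan is a Hahn–Banach-plus-Poincar\'e-lemma argument. First, define a linear functional $\Phi$ on the subspace $V := S_2(X_{\leq R+1}, X_{[R, R+1]}) \subset \mathcal{E}_2(Y)$ by $\Phi(T) := (\partial T)(\eta)$; by hypothesis $|\Phi(T)| \leq \lambda M(T)$. Extend $\Phi$ to a continuous functional $\Psi$ on $\mathcal{E}_2(Y)$ with norm $\leq \lambda$ via Hahn–Banach. The technical heart of the proof is the next step: appeal to the duality between bounded $p$-currents and bounded $p$-forms (Federer's representation theorem for flat currents, Geometric Measure Theory Ch.~4) to realize $\Psi$ as integration against a 2-form $\omega$ with $\|\omega\|_\infty \leq \lambda$, that is, $\Psi(T) = T(\omega)$ for all $T \in \mathcal{E}_2(Y)$. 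A standard mollification, legitimized by the closedness of $\omega$ established below, allows one to take $\omega$ smooth without enlarging the sup norm.

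Two immediate consequences pin down $\omega$. First, $\omega$ is closed: for any smooth 3-chain $U$ in $Y$, $\partial(\partial U) = 0$ gives $\partial U \in V$, so $(\partial U)(\omega) = \Psi(\partial U) = \Phi(\partial U) = 0$, which forces $d\omega = 0$. Second, $\omega = d\eta$ on $X_{[R,R+1]}$: for any smooth 2-chain $T$ supported in $X_{[R,R+1]}$, Stokes and the definition of $\Phi$ give $T(\omega) = \Psi(T) = \Phi(T) = (\partial T)(\eta) = T(d\eta)$, so $\omega$ and $d\eta$ agree on this collar.

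To produce $\tilde\eta$, use that $Y$ deformation-retracts to $S^{n-2}$ (with $n-2 > 2$ in the setting of the main theorem), so $H^2(Y;\mathbb{R}) = 0$ and $\omega = d\xi$ for some $\xi \in \Omega^1(Y)$. On $X_{[R,R+1]} \simeq S^{n-2} \times S^1 \times I$ the form $\xi - \eta$ is closed; its class in $H^1(X_{[R,R+1]}) \cong \mathbb{R}$ is detected by the meridian loop $\gamma$ around $\Sigma$. But $\gamma = \partial T_\gamma$ for a disk $T_\gamma \in V$ spanning it in the $\mathbb{R}^2$-factor, so by Stokes and our representation $\int_\gamma \xi = \int_{T_\gamma} \omega = \Psi(T_\gamma) = \Phi(T_\gamma) = \int_\gamma \eta$. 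Hence $\xi - \eta$ has vanishing period on $\gamma$ and is exact on $X_{[R,R+1]}$: $\xi - \eta = df$ for some $f \in C^\infty(X_{[R,R+1]})$. Extending $f$ smoothly to $\tilde f \in C^\infty(Y)$ (by cutoff) and setting $\tilde\eta := \xi - d\tilde f$ gives $\tilde\eta|_{X_{[R,R+1]}} = \eta$ and $d\tilde\eta = \omega$, with $\|d\tilde\eta\|_\infty \leq \lambda$. The main obstacle is the Federer-type representation step: converting a Hahn–Banach extension on mass-normed currents into a bounded, and ultimately smooth, 2-form requires the full strength of flat-current duality, whereas the topological exactness argument at the end is essentially the Poincar\'e lemma on $Y \simeq S^{n-2}$ and is routine.
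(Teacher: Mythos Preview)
Your argument is correct, but it follows a genuinely different route from the paper's. The paper works on the level of $1$-currents: it defines $F(\tau)=\int_\tau \eta$ on the subspace $\mathcal{B}_0=\partial S_2(X_{\leq R+1},X_{[R,R+1]})$, applies Hahn--Banach to extend to $\mathcal{B}_1=\partial S_2(X_{\leq R+1})$ using the \emph{filling seminorm} $p(\tau)=\inf\{M(\sigma):\partial\sigma=\tau\}$, then extends once more to all of $\mathcal{E}_1(X_{\leq R+1})$. The represented $1$-form $\tilde\eta$ automatically satisfies $\|d\tilde\eta\|_\infty\leq\lambda$ because the filling bound $|\tilde F(\partial\sigma)|\leq\lambda M(\sigma)$ holds for every $2$-chain $\sigma$. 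No cohomology is invoked.

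You instead extend on $2$-currents, represent the extension as a bounded $2$-form $\omega$, and then \emph{integrate}: use $H^2(Y;\mathbb{R})=0$ to write $\omega=d\xi$, and correct $\xi$ by an exact form after checking that the meridian period of $\xi-\eta$ vanishes. This is clean and makes the object $d\tilde\eta=\omega$ the primary actor, but it costs you two topological inputs that the paper's argument avoids entirely: the vanishing of $H^2(Y)$ (which holds here only because $n>4$) and the explicit period computation on $H^1$ of the collar. The paper's two-step Hahn--Banach through the filling norm sidesteps both, and would go through even in situations where $H^2(Y)\neq 0$. Both proofs ultimately lean on the same Gromov--Lawson/Federer representation step (functional on mass-bounded currents $\leadsto$ bounded form), which you rightly flag as the technical crux.
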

\begin{proof}
The proof is a slight modification of \cite{gromov1983positive} Proposition 7.25. We repeat the argument for the completeness of this paper.\\
Set $\mathcal{B}_0:=\partial S_2(X_{\leq R+1}, X_{[R, R+1]})$, $\mathcal{B}_1:=\partial S_2(X_{\leq R+1})$. 
Define a functional $F(\tau)=\int_\tau \eta$ on $\mathcal{B}_0$. Define the continuous semi-norm $p$ on $\mathcal{B}_1$ as
\[p(\tau)=\inf\{M(\sigma);\exists\sigma\in\mathcal{S}_2(X_{\leq R+1})\text{s.t. }\partial\sigma=\tau\}\]
By the Hahn-Banach theorem, there exists an extension $\tilde{F}:\mathcal{B}_1\to \mathbb{R}$ of $F$ such that $|\tilde{F}(\tau)|\leq\lambda\cdot p(\tau)$.\\
The second application of the Hahn-Banach theorem gives an extension $\tilde{F}$ on $\mathcal{E}_1(X_{\leq R+1})$. The corresponding 1-form $\tilde{\eta}$ of $\tilde{F}$ is what we want because for any small Lipschitz 2-simplex $\sigma$,
\[\int_\sigma (d\tilde{\eta}, vol)vol=\int_\sigma d\tilde{\eta}\leq \lambda M(\sigma)\]
Thus $||d\tilde{\eta}||_\infty\leq\lambda$.
\end{proof}
Consider for each $a$, the number
 \[e(a)=\inf\{M(T)|T\in S_2(X_{\leq R+1}, X_{[R, R+1]}), (\partial T)(\eta)=1\}\]
Set $e(\infty):=\sup \{\lim_{k\to\infty} e(R_k); R_1<R_2<\cdots \to \infty \} $(See (7.30) in \cite{gromov1983positive}.)

\begin{proof}[Proof of Theorem \ref{main2}]
We can assume $f$ is strictly contracting at infinity because we can replace $f$ by $f'=h\circ f$ where $h$ is a self-map on $\mathbb{S}^n$ which is contracting except for a neighborhood of some point. (Notice that there exists a point $p$ such that $dil(f)_p<1$.)\\
Set $C(n):=2C_n^{-1}$. 
We first claim $e(\infty)>C_n^{-1}\max_i\{|\theta_i|\}$. Suppose not, there exists $R_1<R_2<\cdots \to \infty$ such that 
$\lim_{k\to\infty} e(R_k)<2C_n^{-1}\max_i\{|\theta_i|\}$. 
Then there exists a real Lipschitz chain $C$ which represents the unit element $[C]$ in $H_2(S^n, W(\Sigma); \mathbb{R})$ satisfies 
\[Area_g(C)< C(n)\cdot \max_i\{|\theta_i|\}\] This contradicts to (2).\\
Thus there exists a sufficiently large $R>0$ 
such that \[e(R)\geq C_n^{-1}\cdot \mathrm{max}\{|log(\xi_i)|; \xi_i\in Hol(E_0^+|_\Sigma)\}\]By Proposition \ref{ext2}, there exists 
$\omega\in\Omega^1(X_{\leq R})$ such that $\omega|_{X_{[R, R+1]}}=\eta$ and 
\[\mathrm{max}\{|log(\xi_i)|; \xi_i\in Hol(E_0^+|_\Sigma)\}\cdot||d\omega||_\infty\leq C_n\]
By using Theorem \ref{ext},
\[\inf_{x\in X}Sc(g)_x<n(n-1)\]
\end{proof}

\section{Examples and Open Problems}
In this section, we examine some examples of Theorem \ref{main2}. The main theorem in this section states that Theorem\ref{main2} holds if any properly embedded surface $S$ bounding $\Sigma$ (i.e. $\partial S=\Sigma$) has $Area_g(S)=\infty$, instead of the assumption in Theorem\ref{main2}, under a suitable condition for metrics at infinity.\\\indent
First, we begin with some definitions and propositions. We continue to use notations of the previous section. For the rest of this section, let $\Sigma$ be an embedded circle $\iota: S^1\to S^n$ and $X:=S^n\backslash \Sigma$. We assume $n>4$ and $n\equiv 0 \mod 4$, and $f: (X,g)\to (X, g_{std})$ is a 1-contracting diffeomorphism covered by a diffeomorphism $\bar{f}: S^n\to S^n$ which fix $\Sigma$, $\bar{f}|_\Sigma=id_{\Sigma}$.
Throughout this section, we fix $R_0>0$ so that $f(X_{>R_0})\subset W_{1/100}(\Sigma)$.
\begin{dfn}\label{A}
Define $\mathcal{A}_R$ as the infimum of $L\geq 1$ over a smooth map $f: D\to E$ satisfying the following conditions where $D$ is a fundamental domain of $\tilde{X}_{[R_0,R]}$ is the universal cover of $X_{[R_0,R]}$.
\begin{itemize}
\item $E=\mathbb{R}^N\times\mathbb{D}^2(r)$ is the product of the Euclidean space and the standard disk with radius $r$, and $f$ is a bundle map covered by an embedding $S^{n-2}\to\mathbb{R}^N$.
\item $||\wedge^2df||_\infty\leq L$ and $||\wedge^2df^{-1}||_{f(D|_p)}\leq L$ for all $p\in S^{n-2}$.
\end{itemize}
Set $\mathcal{A}:=\limsup_{R\to\infty} \mathcal{A}_R$. 
\end{dfn}
Set $\mathcal{T}_R:=\{A|\exists p\in S^{n-2}s.t.\ A=X_{[R_0,R]}|_p\}$.
\begin{prop}
Suppose that 
\[\mathcal{A}_R^2<\frac{\lambda}{4\pi}\cdot \inf_{A\in\mathcal{T}_R}Area_g (A)\]
Then there exists $\tilde{\omega}\in\Omega^1(X_{[R_0,R]})$ such that $\tilde{\omega}|_{X_R}=\beta^*d\theta$, $\tilde{\omega}|_{X_{R_0}}=0$ and $||d\tilde{\omega}||_\infty<\lambda$. 
\end{prop}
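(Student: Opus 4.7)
The plan is to adapt the Hahn--Banach extension argument of Proposition~\ref{ext2} to the two-boundary setting. Define a 1-form $\eta$ on $X_R \sqcup X_{R_0}$ by $\eta|_{X_R} := \beta^*d\theta$ and $\eta|_{X_{R_0}} := 0$, and introduce
\[
\mathcal{B}_0 := \partial S_2\bigl(X_{[R_0,R]},\, X_R \cup X_{R_0}\bigr), \qquad \mathcal{B}_1 := \partial S_2\bigl(X_{[R_0,R]}\bigr).
\]
On $\mathcal{B}_0$ the functional $F(\tau) := \int_\tau \eta$ is well-defined, and on $\mathcal{B}_1$ we consider the seminorm $p(\tau) := \inf\{M(\sigma) : \sigma \in S_2(X_{[R_0,R]}),\ \partial \sigma = \tau\}$. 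As in Proposition~\ref{ext2}, once one verifies the continuity estimate
\[
|F(\tau)| \;\leq\; \lambda \cdot p(\tau) \qquad \text{for every } \tau \in \mathcal{B}_0, \tag{$\star$}
\]
two successive applications of Hahn--Banach (first extending $F$ from $\mathcal{B}_0$ to $\mathcal{B}_1$, then extending to $\mathcal{E}_1(X_{[R_0,R]})$ with the mass norm $\lambda M$) produce a 1-current whose associated smooth 1-form $\tilde\omega \in \Omega^1(X_{[R_0,R]})$ satisfies $\tilde\omega|_{X_R} = \beta^* d\theta$, $\tilde\omega|_{X_{R_0}} = 0$, and $\|d\tilde\omega\|_\infty < \lambda$, exactly as in the final paragraph of Proposition~\ref{ext2}. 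So the whole game is to prove $(\star)$.

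To set up $(\star)$, fix $\sigma \in S_2(X_{[R_0,R]})$ with $\partial \sigma = \tau_R + \tau_{R_0}$ supported on the two boundary slices, and note that $\beta^*d\theta$ is a \emph{closed} 1-form on $X_{[R_0,R]} \subset X^*$. Stokes' theorem then gives
\[
0 \;=\; \int_{\partial \sigma}\beta^* d\theta \;=\; \int_{\tau_R}\beta^* d\theta + \int_{\tau_{R_0}}\beta^* d\theta,
\]
so that $|F(\partial\sigma)| = \bigl|\int_{\tau_{R_0}}\beta^*d\theta\bigr| = 2\pi |k|$, where $k \in \mathbb{Z}$ is the winding number of $\tau_{R_0}$ around the $S^1$-factor of $X_{R_0} \cong S^{n-2}\times S^1$ (equivalently, the degree of $\beta|_{\tau_{R_0}}$). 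Because $H_2(X_{[R_0,R]}) = 0$ in the relevant dimensions, the winding number on $\tau_R$ equals $k$ as well, so $\sigma$ genuinely cobords winding-$k$ loops on the two ends. The remaining task is thus the geometric inequality $M(\sigma) \geq 2\pi |k|/\lambda$.

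The hard part, and the step which invokes the invariant $\mathcal{A}_R$, is to convert the topological datum $|k|$ into a lower bound on $M(\sigma)$ using the bundle map $\psi: D \to E = \mathbb{R}^N \times \mathbb{D}^2(r)$ of Definition~\ref{A}. After barycentric subdivision, lift $\sigma$ to a 2-chain $\tilde\sigma$ inside a single fundamental domain $D \subset \tilde X_{[R_0,R]}$, chosen so that the lifts of $\tau_{R_0}$ connect points whose endpoints differ by $k$ deck translations in the $\mathbb{R}$-direction of $\tilde X_{R_0} \cong S^{n-2}\times \mathbb{R}$. The 2-dilation bound $\|\wedge^2 d\psi\|_\infty \leq \mathcal{A}_R$ gives the push-forward estimate
\[
\operatorname{Area}_E(\psi_*\tilde\sigma) \;\leq\; \mathcal{A}_R \cdot M(\sigma),
\]
while in the reverse direction, because $\|\wedge^2 d\psi^{-1}\|_{\psi(D|_p)} \leq \mathcal{A}_R$ on each fiber and $\operatorname{Area}_g(D|_p) \geq \inf_{A\in \mathcal{T}_R}\operatorname{Area}_g(A)$, a Fubini/coarea argument along the fibers of $\operatorname{pr}_{\mathbb{R}^N}\circ\psi$ converts the fact that $\psi_*\tilde\sigma$ must ``fill'' a winding-$k$ loop in each generic fiber disk $\mathbb{D}^2(r)$ into a lower bound of the shape
\[
\operatorname{Area}_E(\psi_*\tilde\sigma) \;\geq\; \frac{|k|}{2\pi\mathcal{A}_R}\cdot \inf_{A\in\mathcal{T}_R}\operatorname{Area}_g(A).
\]
Combining the two inequalities and invoking the hypothesis $\mathcal{A}_R^2 < \tfrac{\lambda}{4\pi}\inf_{A\in\mathcal{T}_R}\operatorname{Area}_g(A)$ yields
\[
M(\sigma) \;\geq\; \frac{|k|\cdot \inf_{A}\operatorname{Area}_g(A)}{2\pi\mathcal{A}_R^2} \;>\; \frac{2\pi |k|}{\lambda},
\]
which is $(\star)$ with strict inequality. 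The main obstacle is the coarea step: one must justify carefully that the winding number $k$ is inherited by the image chain $\psi_*\tilde\sigma$ at almost every $\mathbb{R}^N$-fiber, despite the fact that $\psi$ is only a bundle map over an \emph{embedding} $S^{n-2}\hookrightarrow \mathbb{R}^N$ rather than a diffeomorphism. I expect this to require a careful choice of regular values and an explicit use of the product structure of $\tilde X_{[R_0,R]}$.
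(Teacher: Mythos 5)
Your plan is a genuinely different route from the paper's, and the step you correctly flag as ``the main obstacle'' is in fact a gap: the Hahn--Banach reduction is sound in outline, but the filling-area lower bound $(\star)$ that it requires is not established, and the coarea mechanism you sketch for it cannot work as written. Concretely, you want to slice the $2$-chain $\tilde\sigma$ (or its image $\psi_*\tilde\sigma$) by $\mathrm{pr}_{\mathbb{R}^N}\circ\psi$ and read off a winding-$k$ loop to be filled in each generic fiber disk $\mathbb{D}^2(r)$. But $\tilde\sigma$ is only $2$-dimensional while the base of the slicing has dimension $n-2\geq 3$ (or $N$), so the generic slice of a $2$-chain over a point of the base has dimension $2-(n-2)<0$, i.e.\ is empty; there is no per-fiber $2$-chain filling a loop, and coarea produces no usable lower bound. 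Even setting that aside, the sketched constants do not close: from $\mathrm{Area}_E(\psi_*\tilde\sigma)\leq \mathcal{A}_R M(\sigma)$, the claimed $\mathrm{Area}_E(\psi_*\tilde\sigma)\geq \tfrac{|k|}{2\pi\mathcal{A}_R}\inf_A\mathrm{Area}_g(A)$, and $\mathcal{A}_R^2<\tfrac{\lambda}{4\pi}\inf_A\mathrm{Area}_g(A)$ one gets $M(\sigma)>\tfrac{2|k|}{\lambda}$, short of the required $\tfrac{2\pi|k|}{\lambda}$ by a factor of $\pi$.

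The paper sidesteps the filling problem entirely and never invokes Proposition~\ref{ext2} here. It picks $L$ with $\mathcal{A}_R^2<L^2<\tfrac{\lambda}{4\pi}\inf_{A\in\mathcal{T}_R}\mathrm{Area}_g(A)$ and a bundle map $f:D\to E=\mathbb{R}^N\times\mathbb{D}^2(r)$ realizing $L$ as in Definition~\ref{A}, transports $\beta^*d\theta$ to a degree-$1$ map $\bar\beta'$ on the boundary circle bundle, and then \emph{explicitly} defines the extension on the disk bundle by $\tilde\omega'=\bar\rho\cdot\bar\beta'^*d\theta$, where $\bar\rho$ is a radial cutoff supported in $r/2\leq|\cdot|\leq r$. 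Since $d\theta$ is closed, $d\tilde\omega'=d\bar\rho\wedge\bar\beta'^*d\theta$, giving $\|d\tilde\omega'\|_\infty\leq \tfrac{2}{r}\cdot\tfrac{2}{r}=\tfrac{4}{r^2}$, which the area hypothesis $\mathrm{Area}(\mathbb{D}(r))^{-1}<\tfrac{\lambda}{4\pi}L^{-1}$ converts into $\|d\tilde\omega'\|_\infty<\lambda L^{-1}$. Pulling back by $f$, whose $2$-dilation is $\leq L$, then yields $\|d(f^*\tilde\omega')\|_\infty<\lambda$. So the invariant $\mathcal{A}_R$ is used only to transplant the problem into a Euclidean product where the radial cutoff is available; no isoperimetric or functional-analytic input is needed. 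If you want to keep the Hahn--Banach framework, you would first need a genuine proof of $(\star)$; as things stand, adopting the paper's direct cutoff construction is both shorter and actually complete.
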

\begin{proof}
Take $L>0$ such that $\mathcal{A}_R^2<L^2<\frac{\lambda}{4\pi}\cdot\inf_{A\in\mathcal{T}_R}Area_g(A)$. By the definition of $\mathcal{A}_R$, we can take a smooth map $f: D\to E$ satisfying the following conditions. (Let $D$ be a fundamental domain of $\tilde{X}_{[R_0,R]}$, i.e. $D\cong S^{n-2}\times [0,1]\times [R_0,R]$. )
\begin{itemize}
\item $E=\mathbb{R}^N\times\mathbb{R}^2$ is the euclidean space and $f$ is a bundle map covered by an embedding $S^{n-2}\to\mathbb{R}^N$.
\item $||\wedge^2df||_\infty\leq L$ and $||\wedge^2df^{-1}||_{f(D|_p)}\leq L$ for all $p\in S^{n-2}$.
\end{itemize}

We can construct $\bar{\beta}:\partial D\to S^1$ such that
\[\bar{\beta}|_{S^{n-2}\times[0,1]\times \{R\}}=\beta\circ (\tilde{X}_{[R_0,R]}\to X_{[R_0,R]})\]
and $\bar{\beta}=*$ on $S^{n-2}\times[0,1]\times \{R_0\}$ and $deg(\bar{\beta}|_{\partial D_p})=1$ for $p\in S^{n-2}$.\\
Notice that $Area(D_p)^{-1}<\frac{\lambda}{4\pi}\cdot L^{-2}$ for all $p\in S^{n-2}$. By considering the tubular neighborhood of $f$, our problem can be reduced to the following lemma.
\begin{lem}
Suppose that there exists a open $U\subset\mathbb{R}^N$ containing $f(X_0)$ and a disk bundle $D':=U\times \mathbb{D}(r)\subset E$ over $U$. (We think $E$ as a trivial bundle over $\mathbb{R}^N$.)\\
We have an inequality
\[Area(\mathbb{D}(r))^{-1}<\frac{\lambda}{4\pi}\cdot L^{-1}\]
for all $p\in U$. We have a map $\bar{\beta}':\partial D'\to S^1$ such that $deg(\bar{\beta}'|_{\partial D'_p})=1$ for $p\in U$. Then there exists an extension $\tilde{\omega}'\in\Omega^1(D')$ of $\omega=\bar{\beta}'^*d\theta$ such that
\[||d\tilde{\omega}'||_\infty<\lambda\cdot L^{-1}\]
\end{lem}
\begin{proof}
Without loss of generality, we can assume $\bar{\beta}'|_{\partial D'_p}:\partial D'_p\to S^1=\frac{1}{r}$ for all $p\in U$ because we can replace $\bar{\beta}'$ by using a homotopy near the neighborhood of $\partial D'$. Set a function $\rho:[0,r]\to [0,1]$  as $\rho\equiv 0$ on $[0,r/2]$ and $\rho(x)$ on $[r/2,r]$ is a smooth approximation of a linear function $\frac{2}{r}x-1$. We define $\bar{\rho}:D'\to [0,1]$ through
\[D'=U\times \mathbb{D}(r)\to \mathbb{D}(r)\to [0,r]\xrightarrow{\rho}[0,1]\].
The map $\mathbb{D}(r)\to [0,r]$ maps $p$ to the radial distance of $p$. We extend $\bar{\beta}'$ on $U\times (\mathbb{D}(r)\backslash\mathbb{D}(r/2))$ by using the retraction $U\times (\mathbb{D}(r)\backslash\mathbb{D}(r/2))$ to $\partial D'$. We define $\tilde{\omega}':=\bar{\rho}\bar{\beta}'^*(d\theta)$ and we have
\[||d\tilde{\omega}'||_\infty\leq ||d\bar{\rho}||\cdot Lip(\bar{\beta}')\cdot ||d\theta||\leq \frac{2}{r}\cdot\frac{2}{r}\leq \lambda\cdot L^{-1}\]
\end{proof}
Then $\tilde{\omega}:=f^*\omega'$ is what we want
\end{proof}

\begin{thm}\label{main3}
Suppose that $\mathcal{A}<\infty$.
Let $\Sigma$ be an embedded circle $\iota: S^1\hookrightarrow S^n$ and $X:=S^n\backslash\Sigma$.
Let $g$ be a complete Riemannian metric on $X$.
\[\inf_{x\in X}Sc(g)_x<n(n-1)\]
holds if the following three conditions hold.
\begin{enumerate}[label=(\arabic*)]
\item $n>4$ and $n\equiv 0 \mod 4$.
\item Any properly embedded surface $S$ bounding $\Sigma$ (i.e. $\partial S=\Sigma$) has $Area_g(S)=\infty$.
\item $f:(X,g)\to (X,g_{std})$ is a $1$-contracting diffeomorphism covered by a diffeomorphism $\bar{f}: S^n\to S^n$ which fix $\Sigma$, $\bar{f}|_\Sigma=id_{\Sigma}$.
\end{enumerate}
\end{thm}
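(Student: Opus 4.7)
The plan is to argue by contradiction, invoking Theorem \ref{ext} to certify that $(g, f, \Sigma)$ is good and then appealing to Theorem \ref{adm}. The new input needed is a global $1$-form $\omega \in \Omega^1(X)$ extending $\beta^* d\theta$ near $\Sigma$ with $\|d\omega\|_\infty$ controlled, which will come from the preceding Proposition once we know that fiber areas grow uniformly in the base $S^{n-2}$.

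As in the proof of Theorem \ref{main2}, I would first reduce to the case where $f$ is strictly contracting at infinity by pre-composing with a self-map of $\mathbb{S}^n$ that is contracting away from one point, and then assume toward contradiction that $\inf_X Sc(g) \geq n(n-1)$. The key geometric observation is that for each $p \in S^{n-2}$ the closure $\overline{X|_p} \subset S^n$ is a properly embedded $2$-disk with boundary $\Sigma$: the closure of $\phi^{-1}(\{p\}\times\mathbb{R}^2)$ in $S^n$ is a $2$-disk with boundary $\Sigma$ by the explicit cone model used in Step~1 of the proof of Proposition \ref{nice}, and applying $\bar f^{-1}$ (which fixes $\Sigma$) exhibits $\overline{X|_p}$ as a properly embedded disk bounding $\Sigma$. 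Condition~(2) therefore forces
\[
A_R(p) := Area_g(X_{[R_0,R]}|_p) \longrightarrow \infty \quad \text{as } R \to \infty, \quad \text{for every } p \in S^{n-2}.
\]

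I would next upgrade this pointwise divergence to the uniform statement $\inf_{p \in S^{n-2}} A_R(p) \to \infty$ by a Dini-type argument: if not, a convergent subsequence $p_n \to p_\infty$ with $A_{R_n}(p_n) \leq M$ would, by continuity of $p \mapsto A_R(p)$ and monotonicity of $R \mapsto A_R(p)$, force $A_R(p_\infty) \leq M$ for every fixed $R$, contradicting $A_R(p_\infty) \to \infty$. Setting $\lambda := C_n / \max_i |\log \xi_i|$ and using $\mathcal{A}_R \leq \mathcal{A} < \infty$, I then choose $R$ large enough that $\mathcal{A}_R^2 < \frac{\lambda}{4\pi} \inf_{A \in \mathcal{T}_R} Area_g(A)$; the preceding Proposition supplies $\tilde\omega \in \Omega^1(X_{[R_0,R]})$ with $\tilde\omega|_{X_R} = \beta^* d\theta$, $\tilde\omega|_{X_{R_0}} = 0$, and $\|d\tilde\omega\|_\infty < \lambda$.

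Extending by $\beta^* d\theta$ on $X_{>R}$ and by $0$ on $X_{<R_0}$, and smoothing the match in thin collars around $X_R$ and $X_{R_0}$ via a cutoff without inflating $\|d(\cdot)\|_\infty$ beyond $\lambda$, produces a global $\omega \in \Omega^1(X)$. Choosing $\delta_0 > 0$ so small that $f^{-1}(W_{\delta_0}(\Sigma)) \subset X_{>R}$, conditions~(1) and~(2) of Theorem \ref{ext} both hold: $\omega|_{f^{-1}(W_{\delta_0}(\Sigma))} = \beta^* d\theta$, and $\max_i|\log\xi_i|\cdot\|d\omega\|_\infty \leq \lambda \cdot \max_i|\log\xi_i| = C_n$. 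Theorem \ref{ext} then certifies that $(g,f,\Sigma)$ is good, and Theorem \ref{adm} delivers $\inf Sc(g) < n(n-1)$, contradicting the standing assumption.

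The main obstacle is the uniformization step, promoting the per-fiber condition $Area_g(\overline{X|_p}) = \infty$ to the uniform divergence $\inf_p A_R(p) \to \infty$. This relies essentially on the compactness of $S^{n-2}$ together with the continuity and monotonicity of the fiber-area functional, and on the identification of each $\overline{X|_p}$ as a properly embedded surface bounding $\Sigma$ (so that condition~(2) actually applies). The smooth gluing of $\omega$ along $X_R$ and $X_{R_0}$ is a secondary technicality, handled by a standard cutoff in a thin collar where $\tilde\omega$ and its neighbors already agree as currents on the boundary hypersurface.
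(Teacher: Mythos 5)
Your proposal is correct and follows essentially the same route as the paper's proof: compactness of $S^{n-2}$ together with continuity/monotonicity of the fiber-area functional upgrades the pointwise consequence of hypothesis~(2) to the uniform divergence $\lim_{R\to\infty}\inf_{A\in\mathcal{T}_R}Area_g(A)=\infty$, after which $\mathcal{A}<\infty$ lets one choose $R$ large enough that the $\mathcal{A}_R$-extension Proposition supplies a suitable $1$-form to feed into Theorem~\ref{ext} and Theorem~\ref{adm}. Your write-up spells out two steps the paper leaves tacit, namely the explicit Dini-type reasoning behind the uniform divergence and the identification of which extension result is actually being invoked (the $\mathcal{A}_R$-Proposition rather than Proposition~\ref{ext2}, which the paper's text cites, apparently by slip), and you also correctly note the need for the reduction to $f$ strictly contracting at infinity, which the paper's proof of this theorem omits.
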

\begin{proof}
First we claim $\lim_{R\to\infty} \inf_{A\in\mathcal{T}_R}Area_g(A)=\infty$. Suppose not, there exists $R_1<R_2<\cdots \to \infty$ such that $A_k=X_{[R_0,R_k]}|_{p_k}\in\mathcal{T}_{R_k}$ and 
\[Area_g (A_k)<\lim_{R\to\infty} \inf_{A\in\mathcal{T}_R}Area_g(A)<\infty\]
Then by the compactness of $S^{n-2}$, we can assume the convergence $p=\lim_k p_k$. Then, set $S$ as $S:=D\cup X_{\geq R_0}|_p$ where $D$ is any disk bounding $X_{\geq R_0}|_p$. This contradicts to (2) because
\[Area_g(S)=Area_g(D)+Area_g(X_{\geq R_0}|_p)\leq Area_g(D)+\lim_k Area_g(A_k)<\infty\]
Thus there exists a sufficiently large $R>0$ 
such that \[\inf_{A\in\mathcal{T}_R}Area_g(A)\geq \mathcal{A}\cdot C_n^{-1}\cdot \mathrm{max}\{|log(\xi_i)|; \xi_i\in Hol(E_0^+|_\Sigma)\}\]By Proposition \ref{ext2}, there exists 
$\omega\in\Omega^1(X_{\leq R})$ such that $\omega|_{X_{[R, R+1]}}=\eta$ and 
\[\mathrm{max}\{|log(\xi_i)|; \xi_i\in Hol(E_0^+|_\Sigma)\}\cdot||d\omega||_\infty\leq C_n\]
By using Theorem \ref{ext},
\[\inf_{x\in X}Sc(g)_x<n(n-1)\]
\end{proof}
First, we give the examples of Theorem \ref{main3}. Notice that our essential problem is the metric near the closed smooth curve. 
\begin{example}
Let $h$ be any complete metric on $W_{1/100}(\Sigma)\backslash{\Sigma}$ such that $h=g_{std}$ near the boundary $\partial W_{1/100}(\Sigma)$. Define a metic $g$ on $S^n\backslash{\Sigma}$ as the glued metric $g=g_{std}\cup h$.\\
If $h$ is periodic along the function $\delta$, i.e. the shift \[\sigma:(X_{[R_0,\infty)}, h)\to (X_{[R_0+1,\infty)},h)\] gives an isometry, then $\mathcal{A}<\infty$ and no properly embedded surface $S$ bounding $\Sigma$ has finite area with respect to $g$ because $g$ is of bounded geometry. The reason of $\mathcal{A}<\infty$ is as follows.\\\indent
First, because of the periodicity, there exists a constant $L_0$ which satisfies the followings. Fix any $R$, let $D$ be a fundamental domain of $\tilde{X}_{[R_0,R]}$ is the universal cover of $X_{[R_0,R]}$. There exists a  bundle map $F_1: D\to \mathbb{S}^{n-2}\times Q$ such that $||\wedge^2dF_1||_\infty\leq L_0$ and $||\wedge^2d{F_1}^{-1}||_{F_0(D_p)}\leq L_0$ for all $p\in S^{n-2}$ where $Q$ is a rectangle in the Euclidean plane with sizes $a$ and $b$.\\\indent
Secondly, define $r:=\max\{\sqrt{a/b},\sqrt{b/a}\}^{-1}$. Set a map $G_1:\mathbb{S}^{n-2}\to \mathbb{S}^{n-2}(r):=r\cdot id$ and an area-preserving map $G_2:Q\to R$ where $R$ is a square with sizes $\sqrt{ab}$. We can check the map $G:=G_1\times G_2$ satisfies $||\wedge^2 dG||_\infty\leq 1$.\\\indent
Thirdly, the map $F:=G\circ F_1$ has the properties in Definition \ref{A}. (Notice that the square with sizes $r$ is bi-Lipschitz to the disk $\mathbb{D}(r)$.)
\end{example}
The relationship between a family of surfaces in a given Riemannian manifold $X$ and the positive lower bound of the scalar curvature of $X$ is pointed out in the paper \cite{gromov2018dozen}. To state the precise statement of the conjecture in \cite{gromov2018dozen}, we recall the following definitions.
\begin{dfn}[Slicings and Waists \cite{gromov2018dozen}]
An \textbf{$m$-sliced $n$-cycle}, $m\leq n$, is an $n$-dimensional simplicial complex $P^n$ and a simplicial map $\varphi:P\to Q$  where $Q$ is an $(n-m)$-dimensional simplicial complex and where all preimages $P_q=\varphi^{-1}(q)\subset P$ have $\dim(P_q)\leq m$, $q\in Q$.\\
The \textbf{$m$-waist}, denoted $waist_m(h)$, of a homology class $h\in H_n(X;\mathbb{Z}/2)$ is the infimum of the numbers $w$, such that $X$ receives a Lipschitz map from a compact $m$-sliced $n$-cycle, $\varphi:P^n\to X$, which represent $h=\varphi_*[P]$ and $Vol_X(\varphi(P_q))\leq w$ for all $q\in Q$.
\end{dfn}
The following conjecture is one of the conjectures posed by Gromov in \cite{gromov2018dozen}.
\begin{conj}
Let $X$ be a closed $n$-dimensional Riemannian manifold. The scalar curvature of $X$ is bounded from below:
\[Sc(X)\geq Sc(\mathbb{S}^n)=n(n-1)\]
Then the slicing area of the fundamental homology class $[X]\in H_n(X;\mathbb{Z}/2)$ is
bounded by
\[waist_2[X]\leq const_{++}\]
(Ideally, it is expected that
$const_{++}=waist_2(S^n)$
where $waist_2[S^n]=area(S^2)=4\pi$.)
\end{conj}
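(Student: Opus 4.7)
The plan is to attempt the iterated minimal slicing / $\mu$-bubble descent, in the spirit of Schoen--Yau and Gromov, to produce an $(n-2)$-parameter family of 2-surfaces in $X$ realizing the fundamental class, each with area at most $4\pi$. The scheme is to preserve the scalar curvature lower bound along each codimension-one reduction so that the sharp inequality $area \leq 4\pi$ falls out of Gauss--Bonnet in the base case.

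First, I would carry out the inductive descent. Given $X^n$ with $Sc \geq n(n-1)$, choose a suitable cohomology class and minimize a $\mu$-bubble functional with prescribed mean curvature to extract a stable hypersurface $\Sigma^{n-1} \subset X$. The stability inequality combined with the Schoen--Yau conformal change produces a metric on $\Sigma^{n-1}$ whose scalar curvature is bounded below by $(n-1)(n-2)$. Iterating, one descends through $\Sigma^{n-2},\ldots,\Sigma^{3},\Sigma^{2}$, the bound $Sc(\Sigma^{k}) \geq k(k-1)$ being inherited at each step after conformal normalization. On the terminal 2-surface, $K = \tfrac{1}{2}Sc \geq 1$, and Gauss--Bonnet (assuming $\Sigma^{2}$ is spherical) gives $area(\Sigma^{2}) \leq 4\pi = area(S^{2})$, matching the ideal constant $const_{++}$.

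Second, I would parametrize this construction. The descent has essentially $(n-2)$ free choices, which can be encoded as Morse heights of the level-set functions defining each $\mu$-bubble, or as homology parameters. Varying these should yield a Lipschitz map $\varphi : P^{n} \to Q^{n-2}$ with $\varphi_*[P] = [X] \in H_n(X;\mathbb{Z}/2)$ and every fiber a 2-surface of area at most $4\pi$, producing the desired slicing. The assembly is analogous to how an iterated Morse-theoretic or warped-product decomposition of $X$ would give a slicing, except that the slices are forced by curvature minimization rather than being prescribed.

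The principal obstacles are three. Regularity in high dimensions: stable minimal hypersurfaces can have singular sets and the conformal descent can lose the scalar curvature lower bound across them, which already forces generalized $\mu$-bubbles or surgery-theoretic techniques beyond $n \leq 7$. Topological control of the terminal 2-slice: a higher-genus component incurs a Gauss--Bonnet deficit and would violate the sharp $4\pi$ bound, so one must argue that energy-minimizing $\mu$-bubbles in the last step are necessarily $S^{2}$'s. Finally, the assembly step: varying the $(n-2)$ descent parameters continuously so that the resulting family is genuinely a slicing representing $[X]$, not merely a pointwise covering, requires a transversality and parametrized regularity argument along the entire descent. These are the obstructions that have kept this conjecture open; what I have outlined is a route rather than a resolution.
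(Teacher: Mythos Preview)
The statement you are attempting is labeled a \emph{Conjecture} in the paper, and the paper supplies no proof of it whatsoever: it is quoted from Gromov's \emph{A Dozen Problems} as motivation and context for the author's own results, and then the paper moves on to pose two further open questions. There is therefore nothing to compare your attempt against.

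Your outline is a reasonable sketch of the Schoen--Yau / $\mu$-bubble descent heuristic that motivates the conjecture in the first place, and you correctly identify the three well-known obstructions (singularities above dimension~7, genus control on the terminal surface, and continuous parametrization of the whole descent into a genuine slicing). But as you yourself say in your final sentence, this is a route rather than a resolution: none of those three gaps is closed in your write-up, and each is a substantial open problem. In particular, the assembly step---turning an $(n-2)$-fold iterated minimization into a Lipschitz map $P^n \to Q^{n-2}$ whose fibers sweep out $[X]$---is not just a transversality issue but requires a min-max or parametrized existence theory that does not currently exist in the literature. So what you have written is an informed summary of why the conjecture is plausible and hard, not a proof, and the paper does not claim otherwise.
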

Finally, we propose the following problem regarding the relationship between a family of surfaces in a given Riemannian manifold $X$ and the positive lower bound of the scalar curvature of $X$.
\begin{qest}
Is there a constant $c(K)$ so that the following statement holds ?\\\indent
Suppose that $K$ is a knot embedded in a sphere $S^3$. Let $g$ be a Riemannian metric on $S^3\backslash K$. Then the inequality 
\[\inf_{x\in S^3\backslash K} Sc(g)_x<6\]
holds if $g\geq g_{std}$ and 
\[Area_g(S)\geq c(K)\]
for all Seifert surfaces $S$ of $K$.
\end{qest}
For the context of quantitative topology, the volume growth in metric spaces is one of the important geometric invariants. (For example, the reader can consult \cite{guth2011volumes}.) There is another question analogous to Theorem \ref{main2}.
\begin{qest}
Suppose that $X$ is a compact $n$-dimensional CW-complex and Let $\Sigma$ be an embedded circle $\iota: S^1\hookrightarrow S^n$. Is there a constant $c(n, \Sigma)$ so that the following statement holds ?\\\indent
Let $f: (X,\partial X)\to (S^n, \Sigma)$ be a continuous map which gives an isomorphism $f_*: H_*(X, \partial X)\xrightarrow{\cong}H_*(S^n, \Sigma)$. Suppose that $f: Int(X)\to \mathbb{S}^n\backslash\Sigma$ is a proper 1-Lipschitz map and that any integral Lipschitz $2$-chain $C$ which is the unit element in $H_2(X,\partial X)$ satisfies
\[Area_X(C)\geq c(n,\Sigma)\]
Then $V_X(R)\geq V_{S^n}(R)$.
\end{qest}
\section*{Acknowledgements}
The author would like to thank his advisor Professor Tsuyoshi Kato for his constant encouragement and helpful suggestions. He would like to thank the professors Kaoru Ono and Nigel Higson for their valuable comments and helpful discussions.

\bibliographystyle{amsalpha}
\bibliography{reference2.bib}

\end{document}